\documentclass[11pt,a4paper]{amsart}
\usepackage[OT1]{fontenc}
\usepackage[english]{babel}
\usepackage{amsmath}
\usepackage{amsfonts}
\usepackage{amsthm}

\usepackage[bookmarks,pdfpagelabels,linkbordercolor={0.1 0.8 0.4}]{hyperref}

\topmargin0cm
\textheight22cm
\footnotesep.3cm
\evensidemargin0cm
\oddsidemargin0cm
\textwidth15cm

\def\RR{\mathbb{R}}

\def\h{ \mathcal{H} }
\def\l{ \mathcal{L} }
\def\b{ \mathcal{B} }

\def\t{ \mathcal{T} }
\def\s{ \mathcal{S} }
\def\e{ \mathcal{E} }
\def\p{ \mathfrak{P} }

\def\k{ \mathcal{K} }

\def\c{ C_{\s,\t} }
\def\cl{ \bar{C}_{\s,\t} }

\def\noi{\noindent}

\def\g1{ \mathfrak{g}_1  }

\def\vp{  \varphi }
\def\la{ \lambda }

\newcommand{\PI}[2]{\left\langle #1 , #2 \right\rangle}
\newcommand{\ov}[1]{\overline{#1}}

\newtheorem{teo}{Theorem}[section]
\newtheorem{prop}[teo]{Proposition}
\newtheorem{lem}[teo]{Lemma}
\newtheorem{coro}[teo]{Corollary}

\theoremstyle{definition}
\newtheorem{rem}[teo]{Remark}
\newtheorem{ejem}[teo]{Example}

\title{Proper subspaces and compatibility}
\author{E. Andruchow, E. Chiumiento and M.E. Di Iorio y Lucero}
\date{}

\begin{document}

\maketitle 

\begin{abstract}
Let $\e$ be a Banach space contained in a Hilbert space $\l$. Assume that the inclusion is continuous with dense range. 
Following  the terminology of Gohberg and Zambicki\v{\i}, we say that a bounded operator  on $\e$ is a proper operator
if it admits an adjoint with respect to the inner product of $\l$. By a proper subspace $\s$ we mean a closed subspace  of $\e$ which
is the range of a proper projection. If there exists a proper projection which is also self-adjoint with respect to the inner product of $\l$, then  $\s$ belongs to a well-known class of subspaces called compatible subspaces. We find equivalent conditions to describe proper subspaces. Then we prove a necessary and sufficient condition to ensure that a proper subspace is compatible. Each proper subspace $\s$ has a supplement $\t$ which is also a proper subspace.  We  give a characterization of the compatibility of both subspaces $\s$ and $\t$. Several examples are provided that illustrate different situations between proper  and compatible subspaces.
\end{abstract}

\bigskip

{\bf 2010 MSC:} 46B20; 47A05; 47A30

{\bf Keywords:}  Projection; Compatible subspace; Proper operator.

\section{Introduction}

Let $\e$ be a Banach space space which is continuously and densely included in a Hilbert space $\l$. 
A bounded operator  on $\e$ is a \textit{proper operator}
if it admits an adjoint with respect to the inner product of $\l$. This definition goes back
to  Gohberg and Zambicki\v{\i}  \cite{gz}, and it gives a simple condition under which they obtained several results on operators in spaces with two norms.  
In this context, we introduce the following class of subspaces: a  subspace $\s$ of $\e$ is called a \textit{proper subspace} if it is the range of a proper projection. If, in addition, the proper projection is self-adjoint with respect to the inner product of $\l$, then $\s$ is called a \textit{compatible subspace}. The aim of the present work is to study proper subspaces and their relation with compatible subspaces. 

The notion of compatible subspace has been studied in recent years. It  was in the paper \cite{cms01} by Corach, Maestripieri and Stojanoff, where the theory of compatibility was introduced and then studied systematically in the works \cite{cms02, cms05}. The usual setting to study problems concerning compatibility differs from our context. 
One has a Hilbert space $(\h, \PI{ \,  }{   \, }_\h)$ and a positive semidefinite $A \in \b(\h)$, where $\b(\h)$ denote the algebra of bounded linear operators on $\h$. Then a bounded sesquilinear form can be defined by $\PI{f}{g}_A=\PI{Af}{g}_\h$, where $f,g \in \h$. If $\s$ is a closed subspace  of $\h$, the set of $A$-self-adjoint projections with range $\s$ is given by
$$ \mathcal{P}(A,\s)=\{ \, Q \in \b(\h) \, : \,Q^2=Q,\, AQ=Q^*A, \, R(Q)=\s \, \}.$$
The subspace $\s$ is compatible if $\mathcal{P}(A,\s)$ is not empty.  When $A$ is an injective operator, this is a special case of the setting described in the first paragraph, where $\e=\h$ and $\l$ the completion of $\h$ with respect to the norm induced  by  the inner product defined by $A$. In this case, if the set $\mathcal{P}(A,\s)$  is not empty, then it is a singleton. We remark that a definition of compatible subspace without assuming that $\e$ is a Hilbert space  was already considered in \cite[Remark 5.8]{cms05b}, but it was not studied in further works.

It is interesting to note that compatible subspaces can be found in the literature many years before.  At the time when this concept was not yet developed,  Sard  used an equivalent definition to give an operator theoretic approach to problems in approximation theory (see \cite{sard, cgm}). On the other hand, Hassi and Nordstr\"{o}m  \cite{hassinord} found conditions that guarantee the existence and uniqueness of self-adjoint projections with respect to an Hermitian form.  More recently, the notion of compatibility has been  related to different topics such as signal processing \cite{eldar, eldar wer}, frame theory \cite{acrs}, de Branges complementation theory \cite{ando90, debr, cms05b}, sampling theory \cite{ac, sz} and abstract splines \cite{cms02a, acg, boor, deutsch}.

Let us describe the contents of this paper. In Section \ref{notation} we establish notation and give the necessary background on proper operators. In Section \ref{section proper s} we prove elementary properties of proper subspaces. The set of all proper operators is an involutive Banach algebra, and thus, proper subspaces are ranges of projections in a Banach algebra.  We find equivalent conditions to describe proper subspaces in Theorem \ref{proper subsp}. One of these conditions asserts that a closed subspace $\s$ of $\e$ is a proper subspace if and only if there is another closed subspace $\t$ of $\e$ satisfying $$\s \, \dot{+} \, \t=(\s^{\perp} \cap \e) \, \dot{+}\,  (\t^{\perp} \cap \e)=\e,$$
where the orthogonal complement is considered with respect to $\l$. This kind of supplements  $\t$, which are also proper subspaces,  will be called \textit{proper companions} of $\s$. 

We address the question of when   a proper subspace is a compatible subspace in Section \ref{proper and comp compan}. Both notions coincide if the subspace has finite codimension, but they are different in general as we shall see in concrete  examples. In Theorem \ref{charac prop compatible s}  we obtain a criterion for a proper subspace to be compatible. Let $\s$ be a proper subspace and $\t$ a proper companion of $\s$, then the projection $P_{\s // \t}$ with range $\s$ and nullspace $\t$ is well-defined and continuous on $\e$. Our criterion basically asserts that $\s$ is compatible if and only if the operator
\[  C_{\s,\t}= P_{\s // \t} + P_{\s // \t} ^+ - I \]
has range equal to $\t \dot{+} (\t \cap \e)$. Here the symbol $+$ stands for the restriction to $\e$ of the adjoint in $\l$.  

We  prove in Theorem \ref{suff cond} different conditions equivalent to the compatibility of both a proper subspace and a fixed proper companion. Among other conditions, we find that a proper subspace $\s$ and a proper companion $\t$ are compatible subspaces exactly when the operator  $C_{\s, \t}$ is invertible on $\e$.  Next we examine when the compatibility of a proper companion $\t$ implies the compatibility of  other proper companion $\t_1$. As we shall show with examples in the next section, this property does not hold in general.  However, it holds in some special cases, for instance if the proper projections associated to a pair of companions are closed enough in a metric induced by the algebra of proper operators (Corollary \ref{two companions}). 
As a curious fact, we point out  that the existence of non compatible proper subspaces is closely related to spectral properties of symmetrizable operators (Corollary \ref{spec prop sym and prop}).

In Section \ref{section examples} we give several examples. In particular, if $\e$ is the space of trace class operators and 
$\l$ is the space of Hilbert-Schmidt operators, we provide  examples of non compatible proper subspaces  (Example \ref{prop no comp}). We also show  that the compatibility of a proper companion does not imply  compatibility of any other proper companion (Example \ref{two diff companions}). Finally, we exhibit examples of proper invertible operators.


\section{Preliminaries and notation}\label{notation}

Let $(\e, \,\| \, \,  \|_\e)$ be a Banach space contained in a Hilbert space $(\l, \, \| \, \,  \|_{\l})$. Denote by 
$\PI{\,  }{\,  }$ the inner product of $\l$. We assume that   
the inclusion $\e\hookrightarrow \l$ is continuous with  dense range. In order to 
simplify some computations, we further suppose that $\|f\|_{\l}\leq \|f \|_{\e}$ for all 
$f \in \e$.

\begin{rem}\label{def de J}
The Banach space $\e$ is continuously and densely contained in some Hilbert space $\l$ if and only if there exists a bounded conjugate-linear operator $J:\e \to \e^*$ such that $(Jf)(f)>0$ for all $f \in \e$, $f \neq 0$. If this condition is fulfilled, $\l$ is the Hilbert space obtained as the completion of $\e$ with respect to the norm $\|f\|_{\l}=((Jf)(f))^{1/2}$ and the inner product is given by the continuous extension of $\PI{f}{g}=(Jg)(f)$, where $f,g \in \e$.
\end{rem}

\subsection{Subspaces and projections.} Let $\b(\mathcal{\e})$ denote the algebra of bounded linear operators on  $\e$.  The range of an operator $T \in \b(\e)$ is denoted by $R(T)$, and its nullspace by $N(T)$. An operator $T \in \b(\e)$ is a projection if $T^2=T$. 
We denote by $\s \, \dot{+} \, \t$ the direct sum of two subspaces $\s$ and $\t$ of $\e$.  If these subspaces are closed and  $\s \, \dot{+} \, \t=\e$, the oblique projection $P_{\s // \t}$ onto $\s$ along $\t$ is the bounded projection with range $\s$ and nullspace $\t$.
Given a subset $\s$ of $\e$,  $\s^{\perp}$ is the usual orthogonal complement as a subspace of $\l$, that is
$$\s^{\perp}=\{ \, f \in \l \, : \,  \PI{f}{g}=0, \, \forall \, g \in \s  \,  \}.$$ 
It is easily seen that $\s^{\perp} \cap \e$ is a closed subspace of $\e$. Moreover, we have $\s^{\perp} \cap \e=J^{-1}(\s^{\circ})$, where $J$ is the map defined in Remark \ref{def de J} and $\s^{\circ}$ is the annihilator of $\s$.  

Throughout, the closure  $\overline{\s}$ of a closed subspace $\s$ of $\e$ is understood with respect to the topology of $\l$. The operator $P_{\overline{\s}}$ is the orthogonal projection onto $\overline{\s}$. It will be useful to state here the following result on projections.

\begin{teo}[Ando \cite{ando}]\label{formula proj}
Let $\s$ and $\t$ be two closed subspaces of a Hilbert space $\l$. If $\s \dot{+} \t=\l$, then the operator
$P_\s - P_\t$ is invertible and 
$$(P_\s - P_\t)^{-1}=P_{\s // \t} + P_{\s // \t} ^* -I  , \, \, \, \, \, \, \, \, \, \, \, P_{\s // \t}=P_\s (P_\s - P_\t)^{-1}. $$
\end{teo}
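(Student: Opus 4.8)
The plan is to work entirely inside $\l$ and exploit that the oblique projection $Q:=P_{\s // \t}$ is bounded, which holds because $\s$ and $\t$ are closed and $\s \dot{+} \t = \l$ (the map $\s \times \t \to \l$, $(s,t)\mapsto s+t$, is a continuous linear bijection, hence a homeomorphism). By construction $Q^2=Q$, $R(Q)=\s$ and $N(Q)=\t$, so $I-Q$ is the projection with range $\t$ and nullspace $\s$. Passing to the Hilbert space adjoint, $Q^*$ is again a projection with $R(Q^*)=N(Q)^{\perp}=\t^{\perp}$ and $N(Q^*)=R(Q)^{\perp}=\s^{\perp}$; in other words $I-Q^*$ is the projection with range $\s^{\perp}$ and nullspace $\t^{\perp}$.

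First I would record the elementary absorption identities coming from the fact that a projection acts as the identity on its range and annihilates its nullspace: $QP_\s=P_\s$, $P_\s Q=Q$, $QP_\t=0$, $Q^*P_{\t^{\perp}}=P_{\t^{\perp}}$, $Q^*P_{\s^{\perp}}=0$ and $P_\s(I-Q^*)=0$. Together with $P_\s+P_{\s^{\perp}}=P_\t+P_{\t^{\perp}}=I$, these are the only ingredients needed.

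Next, set $C=Q+Q^*-I$ and note that both $C$ and $P_\s-P_\t$ are self-adjoint. A short computation with the identities above gives $CP_\s=Q^*P_\s$ and $CP_\t=Q^*-I$, hence
\[
C(P_\s-P_\t)=Q^*P_\s-Q^*+I=-Q^*P_{\s^{\perp}}+I=I .
\]
Thus $C$ is a left inverse of $P_\s-P_\t$; since both operators are self-adjoint, taking adjoints upgrades this to $(P_\s-P_\t)C=I$, so $P_\s-P_\t$ is invertible with $(P_\s-P_\t)^{-1}=C=P_{\s // \t}+P_{\s // \t}^{*}-I$. Finally, multiplying on the left by $P_\s$ and using $P_\s Q=Q$ together with $P_\s Q^*=P_\s-P_\s(I-Q^*)=P_\s$ gives $P_\s C=Q$, that is, $P_{\s // \t}=P_\s(P_\s-P_\t)^{-1}$.

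I do not expect a genuine obstacle here; the points demanding care are the correct identification of the range and nullspace of $Q^*$ (so that each absorption identity is applied to the appropriate subspace) and the bookkeeping of which products vanish. The self-adjointness observation is what keeps the argument short, since it removes the need for a separate parallel verification of the right inverse.
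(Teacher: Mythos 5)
Your argument is correct: the absorption identities are all valid (in particular $R(Q^*)=\t^{\perp}$ and $N(Q^*)=\s^{\perp}$ because an idempotent has closed range), the computations $CP_\s=Q^*P_\s$, $CP_\t=Q^*-I$, and hence $C(P_\s-P_\t)=I$ check out, and the self-adjointness of both $C$ and $P_\s-P_\t$ legitimately upgrades the left inverse to a two-sided inverse; the final step $P_\s C=Q$ is also sound. Note, however, that the paper itself gives no proof of this statement --- it is quoted as Ando's theorem (with the first formula attributed to Buckholtz) --- so there is no in-paper argument to compare with; your direct algebraic verification is a perfectly adequate self-contained substitute for the cited result, covering exactly the implication the paper uses (it does not, and need not, address the converse direction present in Ando's original work).
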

\noi We remark that the first formula above was first proved by Buckholtz \cite{buck}.

\subsection{Proper operators} 
In this subsection, we describe the basic properties of proper operators proved in \cite{gz}. 
 An operator $T \in \b(\e)$ is a \textit{proper operator}  if and only if for every $f \in \e$, there is a vector $g \in \e$ such that $\PI{Th}{f}=\PI{h}{g}$ for all $h \in \e$. This allows to define $T^+f=g$, and it can be shown that $T^+ \in \b(\e)$.

\begin{teo}[Gohberg-Zambicki\v{\i} \cite{gz}]\label{proper op}
Let $T$ be a proper operator. Then following statements hold:
\begin{enumerate}
	\item[i)] $T$ has a bounded extension $\bar{T}$  on $\l$. The usual operator norms of $\bar{T} \in \b(\l)$ and $T \in \b(\e)$ are related by  $$\| \bar{T}\|_{\b(\l)}\leq \min \{  \,  \| T^+T\|_{\b(\e)}, \, \|TT^+\|_{\b(\e)} \, \}.$$
	\item[ii)] If $\sigma_\e(T)$ and $\sigma_\l(\bar{T})$ denote the spectrum of $T$ on $\e$ and the spectrum of $\bar{T}$ on $\l$, respectively, then 
	$$\sigma_\l(\bar{T}) \subseteq \sigma_\e(T) \cup \overline{\sigma_\e(T^+)},$$ 
	where the last bar indicates complex conjugation.
  \item[iii)] If $T$ is a compact operator on $\e$, then $T$  is a compact operator on $\l$. Moreover, $\sigma_\l(\bar{T})=\sigma_\e(T)$ and the eigenspaces of $T$  in  $\e$ and $\l$ corresponding to the non zero eigenvalues  coincide.
\end{enumerate}
\end{teo}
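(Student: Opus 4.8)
The plan is to treat the three items in turn; each one reduces to the defining property of a proper operator together with a short Cauchy--Schwarz or holomorphic functional calculus argument.

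\emph{(i).} First record that proper operators are stable under composition, with $(AB)^+=B^+A^+$, and that $(T^+)^+=T$. In particular $S:=T^+T$ is proper and \emph{symmetric} for $\PI{\cdot}{\cdot}$, since $\PI{Sh}{g}=\PI{Th}{Tg}=\PI{h}{Sg}$ for $h,g\in\e$. For $f\in\e$ one has $\|Tf\|_\l^2=\PI{Sf}{f}\leq\|Sf\|_\l\|f\|_\l$, so everything reduces to bounding $\|Sf\|_\l$ by $\|f\|_\l$. Here I would iterate Cauchy--Schwarz using the symmetry of $S$: $\|S^{2^k}f\|_\l^2=\PI{S^{2^{k+1}}f}{f}\leq\|S^{2^{k+1}}f\|_\l\|f\|_\l$ for every $k\geq0$, so telescoping these inequalities and using $\|S^{2^n}f\|_\l\leq\|S^{2^n}f\|_\e\leq\|S^{2^n}\|_{\b(\e)}\|f\|_\e$ gives, after $n\to\infty$, $\|Sf\|_\l\leq r(S)\|f\|_\l$ with $r(S)=\lim_m\|S^m\|_{\b(\e)}^{1/m}$ the spectral radius of $S$ in $\b(\e)$. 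Hence $\|Tf\|_\l^2\leq r(T^+T)\|f\|_\l^2\leq\|T^+T\|_{\b(\e)}\|f\|_\l^2$, and by density $T$ extends to $\bar T\in\b(\l)$ with the asserted control on its norm; applying the same argument to $T^+$ and noting $\bar{T}^{*}=\overline{T^+}$ (a density computation) produces the other term of the minimum.

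\emph{(ii).} Let $\lambda\notin\sigma_\e(T)\cup\overline{\sigma_\e(T^+)}$. Then $A:=T-\lambda$ is proper and invertible in $\b(\e)$, and $A^+=T^+-\bar\lambda$ is also invertible in $\b(\e)$. Writing $h=Au$ one checks immediately that $A^{-1}$ is again proper, with $(A^{-1})^+=(A^+)^{-1}$. By (i), $A^{-1}$ extends to $\overline{A^{-1}}\in\b(\l)$, and since the extension map is multiplicative on the dense subspace $\e$, $\overline{A^{-1}}$ is a two-sided inverse of $\overline{A}=\bar T-\lambda$ in $\b(\l)$; thus $\lambda\notin\sigma_\l(\bar T)$, which is the stated inclusion.

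\emph{(iii).} For compactness I would pass to a positive self-adjoint operator. The operator $S=T^+T$ is compact on $\e$ (bounded $\circ$ compact), proper and symmetric, and $\bar S=\bar{T}^{*}\bar T\geq0$ on $\l$; since $A^{*}A$ compact forces $A$ compact on a Hilbert space, it is enough to prove $\bar S$ compact on $\l$. By Riesz theory $\sigma_\e(S)\setminus\{0\}$ consists of eigenvalues $\mu_n\to0$ of finite $\e$-multiplicity; one checks $\mu_n>0$ from $\mu_n\|f\|_\l^2=\|Tf\|_\l^2$, and, using symmetry of $S$, that the ascent of $S-\mu_n$ equals one and $\ker(S-\mu_n)\perp R(S-\mu_n)$ for the inner product of $\l$ (both computed in $\e$). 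Then $\l$ is the orthogonal sum of $\ker(S-\mu_n)$ and $\overline{R(S-\mu_n)}$, both $\bar S$-invariant, and applying (ii) to the (proper, symmetric) restriction of $S$ to $R(S-\mu_n)$ gives $\mu_n\notin\sigma(\bar S|_{\overline{R(S-\mu_n)}})$, whence $\ker_\l(\bar S-\mu_n)=\ker_\e(S-\mu_n)$ is finite dimensional. Together with $\sigma_\l(\bar S)\subseteq\sigma_\e(S)$ from (ii), this shows $\bar S$ is a self-adjoint operator whose nonzero spectrum is a null sequence of finite-multiplicity eigenvalues, hence compact; so $\bar T$ is compact. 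For $\sigma_\l(\bar T)=\sigma_\e(T)$ and the coincidence of eigenspaces at $\lambda\neq0$, the essential point is that an $\l$-eigenvector of $\bar T$ for a nonzero $\lambda$ automatically lies in $\e$: the Riesz idempotent $P_\lambda$ of $T$ at the isolated point $\lambda$ of $\sigma_\e(T)$, a contour integral of the resolvent, is a proper operator with finite-dimensional range $\ker_\e((T-\lambda)^{k})\subseteq\e$, hence extends to a bounded idempotent on $\l$ commuting with $\bar T$; decomposing the eigenvector along it and invoking (ii) once more on the complementary invariant subspace (on which $\bar T-\lambda$ stays invertible) forces it into $\ker_\e((T-\lambda)^{k})$. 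Combined with the compactness of $\bar T$ and the Fredholm alternative on both $\e$ and $\l$, this yields $\sigma_\l(\bar T)=\sigma_\e(T)$ and $\ker_\l(\bar T-\lambda)=\ker_\e(T-\lambda)$ for $\lambda\neq0$.

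\emph{The main obstacle} is the compactness transfer in (iii): because $\l$-bounded subsets of $\e$ need not be $\e$-bounded, the relative compactness of $T(B_\e)$ cannot be pushed through the inclusion by a soft argument, and one is forced into the spectral detour above, through the positive symmetric operator $S$ and the $\l$-orthogonality of its $\e$-eigenspaces. The accompanying technical points — that $P_\lambda$ and the relevant restrictions of $T$ and $S$ to invariant complements are again proper, so that (i) and (ii) apply — are where the argument has to be carried out with care.
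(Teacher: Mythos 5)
The paper itself offers no proof of this theorem (it is quoted from Gohberg--Zambicki\v{\i} \cite{gz}), so your argument can only be judged on its own terms. Items (i) and (ii) are essentially correct and standard: the Cauchy--Schwarz iteration on the symmetrizable operator $S=T^+T$ gives $\|Sf\|_\l\le r_\e(S)\|f\|_\l$, hence $\|\bar T\|_{\b(\l)}^2\le \|T^+T\|_{\b(\e)}$, and the resolvent argument in (ii) is fine. One caveat in (i): what you prove is the \emph{squared} inequality $\|\bar T\|^2_{\b(\l)}\le\min\{\|T^+T\|_{\b(\e)},\|TT^+\|_{\b(\e)}\}$, not the displayed one; the displayed inequality cannot hold verbatim (replace $T$ by $\epsilon T$ with $\epsilon<1$), so yours is the correct version, but you should say so instead of calling it ``the asserted control''. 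The first half of (iii) — compactness of $\bar S$ via the positivity, ascent one and $\l$-orthogonality of the $\e$-eigenspaces of the compact symmetrizable $S$, then compactness of $\bar T$ — is a sound Krein/Lax-type argument, including the application of (ii) to the restriction of $S$ to $R(S-\mu_n)$, which is legitimate because that restriction is again symmetric.

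The genuine gap is the last part of (iii). You assert that the Riesz idempotent $P_\lambda$ of $T$ at an isolated point of $\sigma_\e(T)$ is proper; nothing you have established gives this. Properness of $P_\lambda$ would require $(z-T)^{-1}$ to be proper on the contour, i.e.\ $\bar z\notin\sigma_\e(T^+)$ there, and $\sigma_\e(T^+)$ is completely uncontrolled at this stage: $T^+$ need not be compact on $\e$, and the $\e$-spectrum of a proper operator can strictly exceed the $\l$-spectrum of its extension (this is exactly why the paper's contour-integral example assumes $\lambda$ isolated in $\sigma_\p(T)$, not merely in $\sigma_\e(T)$). Equivalently, properness of the finite-rank idempotent amounts to the root functionals of the transpose $T'$ at $\lambda$ lying in $R(J)$, which is not automatic. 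The same problem recurs when you invoke (ii) on the complementary invariant subspace: (ii) yields invertibility of the extension only if the $+$-adjoint of the restriction is also invertible on that subspace, which is again the uncontrolled item; and for a putative $\lambda\in\sigma_\l(\bar T)\setminus\sigma_\e(T)$ there is no idempotent at all, so the step ``$\bar T-\lambda$ stays invertible'' begs the question. The clean repair avoids spectral projections altogether and uses the Fredholm alternative you mention only in passing: for $\lambda\neq 0$, the map $x\mapsto \PI{\,\cdot\,}{x}|_{\e}$ is an injection of $N(\bar T^*-\bar\lambda)$ into the eigenspace of $T'$ at $\lambda$ in $\e^*$, so $\dim N_\e(T-\lambda)\le \dim N_\l(\bar T-\lambda)=\dim N_\l(\bar T^*-\bar\lambda)\le \dim N_{\e^*}(T'-\lambda)=\dim N_\e(T-\lambda)$ by Riesz--Schauder theory on $\l$ and on $\e$; the forced equalities give $N_\l(\bar T-\lambda)=N_\e(T-\lambda)$ and, since both operators are compact so their nonzero spectra consist of eigenvalues, $\sigma_\l(\bar T)=\sigma_\e(T)$.
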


When $T$ is a proper operator, it turns out that $T^+=\bar{T}^*|_E$, where the last adjoint is the  adjoint of $\bar{T}$ with respect to the inner product of $\l$. A \textit{symmetrizable operator} is a proper operator $T$ such that $T^+=T$. This class of operators   was studied independently by  Dieudonn\'e \cite{dieudonne}, Krein \cite{krein b} and Lax \cite{lax}.

We denote by $\p$ the set of all proper operators. It is not difficult to see that
$\mathfrak{P}$ is not closed in $\b(\e)$. However, $\p$ becomes an involutive unital Banach algebra   with  the norm defined by
\[  \|T\|_{\p} := \| T \|_{\b(\e)} + \|T^+\|_{\b(\e)} \,. \]

\begin{rem}\label{notions of spec} 
There are three different notions of groups of invertible operators: the group of invertible operators on $\e$, the group of invertible operators on $\l$ and the group of invertible operators of the algebra $\p$. These groups are denoted by $Gl(\e)$, $Gl(\l)$ and $\p^{\times}$, respectively.   If $T$ is a proper operator, we write $\sigma_\e(T)$ for the spectrum in the Banach space $\e$, $\sigma_\l(\bar{T})$ for the spectrum of the continuous extension $\bar{T}$ in the Hilbert space $\l$ and  $\sigma_{\p}(T)$ for spectrum  in the Banach algebra $\p$.  The relation between the first two notions of spectrum is stated in Theorem \ref{proper op}. Since $T$ is invertible in the algebra $\p$ if and only if $T$ and $T^+$ are invertible on $\e$, one can see that 
$$ \sigma_\p (T)=\sigma_\e(T) \cup \overline{\sigma_\e(T^+)}.  $$ 
 There are examples which show that the inclusion $\sigma_\e(T)\subseteq \sigma_\p(T)$ may be strict.  
\end{rem}

\section{Proper subspaces}\label{section proper s}

A closed subspace $\s \subseteq \e$ is called a $\textit{proper subspace}$ if there exists a proper projection $Q$ such that $R(Q)=\s$.
In this section we prove  basic facts on proper subspaces. We start with two  examples of proper subspaces.

\begin{ejem}\label{dim rgo finita}
Let $\s$ be a finite dimensional subspace of $\e$. We can construct a basis $\{ \, f_1 , \ldots, f_n \, \}$ of $\s$ satisfying $\PI{f_n}{f_m}=\delta_{nm}$. In fact, note that we only have to apply the Gram-Schmidt process to any basis of $\s$ to get a new basis with this property.
On the other hand, it is well known that as an operator on $\l$, 
any  projection $Q$ onto $\s$ can be written as 
\begin{equation}\label{proj finito}
  Q= \sum_{i=1}^n  \PI{\, \cdot \,}{h_n}f_n \, ,    
\end{equation}  
where $\{ \, h_1 \, , \, \ldots \, h_n \}$ is an orthonormal set satisfying $\PI{f_n}{h_n}=\delta_{nm}$. If we restrict this projection to $\e$, we find a characterization of an arbitrary proper projection $Q$ with finite dimensional range: $Q$ is a proper projection if and only if $h_1 \, , \,  \ldots \, , \, h_n \in \e$. Furthermore, by choosing $h_i=f_i$, $i=1, \, \ldots \, , n$, we have proved that any finite dimensional subspace  is a proper subspace.    
\end{ejem}

\begin{ejem}
Let $T$ be a proper operator. Suppose that $\la$ be an isolated point in $\sigma_\p(T)$. For the different notions of spectrum of a proper operator see Remark \ref{notions of spec}. Let $B_\epsilon(\la)$ be the open ball of radius $\epsilon$ centered at $\la$. Assume that $\epsilon$ satisfies $B_{2\epsilon}(\la) \cap \sigma_\p(T)=\{ \, \la \, \}$. In particular, this implies that $B_{2\epsilon}(\la) \cap \sigma_\e(T)=\{ \, \la \, \}$ and $B_{2\epsilon}(\bar{\la}) \cap \sigma_\e(T^+)=\{ \, \bar{\la} \, \}$.
We claim that 
\[   Q= \frac{1}{2 \pi i} \int_{\partial B_\epsilon(\la)} (z-T)^{-1} \, dz \]
is a proper projection, and thus, $R(Q)$ is a proper subspace. 

To prove our claim, let $\gamma:[-\pi,\pi]\to \partial B_\epsilon(\la)$ be a smooth curve with the positive orientation. Pick a partition $0=t_0 < t_1 < \ldots <t_n= \pi$ of the interval $[0,\pi]$, and then, consider the partition $t_{-k} =- t_k$ of $[-\pi,0]$. For $n$ large enough, the above defined integral can be approximated by the Riemann sum
\[  \frac{1}{2\pi i} \sum_{i=-n}^n (\gamma(t_i)-T)^{-1} \dot{\gamma}(t_i) \Delta t_i . \]
On the other hand, if $z \in \partial B_\epsilon(\la)$, then $z-T$ and  $\bar{z}-T^+$ are invertible in $\e$. We can define the following projection
\[   P= \frac{1}{2 \pi i} \int_{\partial B_\epsilon(\bar{\la})} (\bar{z}-T^+)^{-1} \, dz. \]
Then the curve $\beta(t)=\overline{\gamma(-t)}$ is positive oriented, and $\beta([-\pi,\pi])=\partial B_\epsilon(\bar{\la})$. The projection $P$ can be approximated by 
\[   \frac{1}{2\pi i} \sum_{i=-n}^n (\beta(t_i)-T^+)^{-1} \dot{\beta}(t_i) \Delta t_i . \]
Next note 
\begin{align*}
\PI{ \frac{1}{2\pi i} \sum_{i=-n}^n (\gamma(t_i)-T)^{-1} \dot{\gamma}(t_i) \Delta t_i \, h}{f} 
& = -\PI{h}{ \frac{1}{2\pi i} \sum_{i=-n}^n (\overline{\gamma(t_i)}-T^+)^{-1} \overline{\dot{\gamma}(t_i)} \Delta t_i \,f} \\
& = \PI{h}{\frac{1}{2\pi i} \sum_{i=-n}^n (\beta({-t_i})-T^+)^{-1} \dot{\beta}(-t_i)\Delta t_i \, f} \\
& = \PI{h}{\frac{1}{2\pi i} \sum_{i=-n}^n (\beta({t_i})-T^+)^{-1} \dot{\beta}(t_i)\Delta t_i \, f} ,
\end{align*}
where in the last step we have used that the partition is symmetric with respect to the origin. 
Letting $n \to \infty$, we get $\PI{Qh}{f}=\PI{h}{Pf}$. Thus, $Q$ is a proper projection and $Q^+=P$. 
\end{ejem}

Now we prove some elementary properties of proper operators.

\begin{lem}\label{relaciones usuales}
Let $T$ be a proper operator, then
\begin{itemize}
\item[i)] $N(T^+)=R(T)^\perp \cap \e$.
\item[ii)] $\ov{R(T^+)}\cap \e=N(T)^\perp \cap \e$.
\end{itemize}
\end{lem}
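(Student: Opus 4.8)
The plan is to establish the two identities by relating membership in each set to a pairing condition, using the defining property of the adjoint $+$ and the characterization $\s^\perp \cap \e = J^{-1}(\s^\circ)$ recalled in Section \ref{notation}, which guarantees that the right-hand sides are closed subspaces of $\e$.

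For part i), I would argue by a direct chain of equivalences. A vector $f \in \e$ lies in $N(T^+)$ if and only if $T^+ f = 0$, which (since $\e$ is dense in $\l$ and the inclusion is injective) holds if and only if $\PI{h}{T^+ f} = 0$ for every $h \in \e$. By the definition of the proper adjoint, $\PI{h}{T^+ f} = \PI{Th}{f}$ for all $h \in \e$, so this says exactly that $\PI{Th}{f} = 0$ for all $h \in \e$, i.e. $f \perp R(T)$ in $\l$. Since $f \in \e$, this is precisely $f \in R(T)^\perp \cap \e$. Reading the chain backwards gives the reverse inclusion, so $N(T^+) = R(T)^\perp \cap \e$.

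For part ii), the cleanest route is to apply part i) to the proper operator $T^+$ in place of $T$ (recall $(T^+)^+ = T$ since $\p$ is an involutive algebra). This gives $N(T) = R(T^+)^\perp \cap \e$. Now I take orthogonal complements in $\l$ and intersect with $\e$. The key point is that for any subset $\mathcal M \subseteq \l$ one has $\mathcal M^\perp = \ov{\mathcal M}^\perp$, and $\ov{\mathcal M}^\perp{}^\perp = \ov{\mathcal M}$ (double orthogonal complement in the Hilbert space $\l$). Applying this with $\mathcal M = R(T^+)$: from $N(T) = R(T^+)^\perp \cap \e$ we want to recover information about $\ov{R(T^+)} \cap \e$. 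Here one must be slightly careful, since intersecting with $\e$ does not commute with taking $\perp$ in general. The way around it: it suffices to show $\ov{R(T^+)} \cap \e = N(T)^\perp \cap \e$ directly by the same pairing argument as in i). Indeed, $g \in \e$ satisfies $g \in N(T)^\perp \cap \e$ iff $\PI{g}{h} = 0$ for all $h \in N(T)$; and $g \in \ov{R(T^+)} \cap \e$ iff $g \perp R(T^+)^\perp$, i.e. $\PI{g}{f} = 0$ for all $f \in R(T^+)^\perp$. Since by part i) applied to $T^+$ we have $R(T^+)^\perp \cap \e = N(T)$, and since $\e$ is dense in $\l$, the condition $\PI{g}{h}=0$ for all $h \in N(T)$ extends by continuity to $\PI{g}{f}=0$ for all $f$ in the $\l$-closure of $N(T)$; but $\ov{N(T)}$ may be strictly larger than $R(T^+)^\perp$ unless $N(T)$ is $\l$-dense in $R(T^+)^\perp$.

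This last point is the one subtlety I expect to be the main obstacle, and I would handle it as follows: since $R(T^+)^\perp$ is $\l$-closed and contains $N(T) = R(T^+)^\perp \cap \e$, and $\e$ is $\l$-dense in $\l$, I claim $R(T^+)^\perp \cap \e$ is $\l$-dense in $R(T^+)^\perp$. Granting the claim, $\ov{N(T)}^{\,\l} = R(T^+)^\perp$, and then $g \perp N(T)$ (for $g \in \e$) is equivalent to $g \perp R(T^+)^\perp$, which by the Hilbert-space double-complement theorem is equivalent to $g \in \ov{R(T^+)}$; intersecting with $\e$ finishes part ii). The claim itself follows because $\e \cap F$ is $\l$-dense in any $\l$-closed subspace $F$ whenever $\e$ is $\l$-dense in $\l$: given $x \in F$, approximate $x$ in $\l$-norm by $e_n \in \e$, then $P_F e_n \in F$ and, since $P_F e_n = \bar T_0 e_n$ for a suitable bounded $\bar T_0$... — actually the simplest justification is that $F = \ov{F \cap \e}^{\,\l}$ holds because $\l \ominus F$ has an orthonormal basis, hence a dense subset, lying in $\e$ by density, and one checks $F \cap \e$ separates that complement; I would spell this out as a short preliminary remark. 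With that in hand both parts follow from the pairing computation.
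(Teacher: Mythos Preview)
Your argument for part~i) is correct and matches the paper's: both reduce membership in $N(T^+)$ to the pairing condition $\PI{Th}{f}=0$ for all $h\in\e$.

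For part~ii) you begin exactly as the paper does---apply i) to $T^+$ to obtain $N(T)=R(T^+)^\perp\cap\e$, then take orthogonal complements in $\l$ and intersect with $\e$. You then correctly isolate the one nontrivial step: passing from $N(T)=R(T^+)^\perp\cap\e$ to $N(T)^\perp\cap\e=\ov{R(T^+)}\cap\e$ amounts to knowing that $R(T^+)^\perp\cap\e$ is $\l$-dense in $R(T^+)^\perp$. However, your proposed justification is wrong. The general claim ``$\e\cap F$ is $\l$-dense in any $\l$-closed subspace $F$ whenever $\e$ is $\l$-dense in $\l$'' is false: for any $g\in\l\setminus\e$ the one-dimensional subspace $F=\{\,\lambda g:\lambda\in\CC\,\}$ satisfies $F\cap\e=\{0\}$, which is certainly not dense in $F$ (compare Example~\ref{no prop no comp}). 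Both of your sketched arguments inherit this defect: the projected vectors $P_F e_n$ need not lie in $\e$, and an orthonormal basis of $\l\ominus F$ has no reason to lie in $\e$. So the density step does not follow from what you wrote, and your proof of~ii) has a genuine gap. (A minor slip along the way: you write that $\ov{N(T)}$ ``may be strictly larger'' than $R(T^+)^\perp$, but the inclusion always goes the other way, since $N(T)\subseteq R(T^+)^\perp$.)

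The paper's own proof of~ii) is a single terse sentence and does not explicitly address the difficulty you isolated. If the needed density of $N(T)=N(\bar T)\cap\e$ in $N(\bar T)$ is to hold, the argument must exploit that $T$ is proper (so that both $\bar T$ and $\bar T^*$ preserve $\e$), not merely that $\e$ is dense in $\l$; your general claim cannot substitute for this.
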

\begin{proof}
$\text{i)}$ Let $f \in N(T^+)$. Then  $\PI{f}{Tg}=\PI{T^+f}{g}=0$ for all $g \in \e$, which means that $f \in R(T)^{\perp}\cap \e$. Conversely, suppose that $f \in R(T)^{\perp}\cap \e$. This is equivalent to  $0=\PI{f}{Tg}=\PI{T^+f}{g}$ for all $g \in \e$, that is,  $f \in N(T^+)$.

\smallskip

\noi $\text{ii)}$ Since $(T^+)^+=T$, we know from the first item that $N(T)=R(T^+)^\perp \cap \e$. Then take the orthogonal complement in $\l$ and the intersection with $\e$. 
\end{proof}

\begin{rem}\label{relaciones usuales proj}
Let $Q$ be a proper projection. As a special case of the first item in the above lemma,  we get that 
$N(Q^+)=R(Q)^{\perp} \cap \e$, and since $R(Q^+)=N((Q-I)^+)$, we also have $R(Q^+)=N(Q)^\perp \cap \e$. 
\end{rem}


\begin{rem}\label{prop transp}
It will be useful to rephrase the definition of proper operator in terms of range inclusions. Indeed, an operator $T \in \b(\e)$ is proper if and only if $R(T'J)\subseteq R(J)$, where $J$ is the operator defined in Remark \ref{def de J} and $T'$ is the transpose of $T$.  
\end{rem}

In the case where $\e=\h$ is a Hilbert space, there is an injective positive operator $A \in \b(\h)$ such that $(Jg)(f)=\PI{Af}{g}_\h$ for all $f,g \in \h$. Then an operator $T \in \b(\h)$ is proper
if and only if there exists an operator $W \in \b(\h)$ such that  $AT=W^*A$, where the last adjoint is the adjoint with respect to $\h$. 
In the case of projections,  there is another useful characterization in \cite[Lemma 2.1]{cms05}: 
a projection $Q$ is proper if and only  
$$  R(A)=(R(A)\cap R(Q^*)) \, \dot{+} \, (R(A) \cap N(Q^*)).$$
 This result can also be proved in our setting with the obvious modifications.  

\begin{lem}\label{proj range split}
Let $Q \in \b(\e)$ be a projection. Then $Q$ is proper if and only if
$$  R(J)=(R(J)\cap R(Q')) \, \dot{+} \, (R(J) \cap N(Q')).$$
\end{lem}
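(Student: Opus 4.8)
The plan is to transfer the problem to the dual space $\e^*$ via the conjugate-linear map $J:\e\to\e^*$ and then reduce to Ando's characterization of proper projections (or, equivalently, to the reformulation in Remark \ref{prop transp}). Recall from Remark \ref{prop transp} that $Q$ is proper precisely when $R(Q'J)\subseteq R(J)$, where $Q':\e^*\to\e^*$ is the transpose. So I would start from this range-inclusion condition and try to show it is equivalent to the asserted splitting of $R(J)$.

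First I would record the linear-algebra facts about the transpose of a projection: $Q'$ is itself a projection on $\e^*$, with $R(Q')=N(Q)^\circ$ and $N(Q')=R(Q)^\circ=R(I-Q)^\circ$, so that $\e^*=R(Q')\,\dot{+}\,N(Q')$. For the forward implication, assume $Q$ is proper. Given $Jf\in R(J)$, decompose $Q'Jf$ and $(I-Q)'Jf=Jf-Q'Jf$; since $Q$ is proper, $Q'Jf=Jg$ for some $g\in\e$, hence also $(I-Q)'Jf=J(f-g)\in R(J)$ (note $I-Q$ is proper too, as $\p$ is an algebra). Thus $Q'Jf\in R(J)\cap R(Q')$ and $(I-Q)'Jf\in R(J)\cap N(Q')$, giving $Jf$ as a sum of the two; directness of the sum is inherited from $\e^*=R(Q')\,\dot{+}\,N(Q')$. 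For the converse, suppose $R(J)=(R(J)\cap R(Q'))\,\dot{+}\,(R(J)\cap N(Q'))$. Take any $Jf\in R(J)$ and write $Jf=\xi_1+\xi_2$ with $\xi_1\in R(J)\cap R(Q')$ and $\xi_2\in R(J)\cap N(Q')$. Applying $Q'$ and using $Q'|_{R(Q')}=\mathrm{id}$, $Q'|_{N(Q')}=0$, I get $Q'Jf=\xi_1\in R(J)$. Since $f$ was arbitrary, $R(Q'J)\subseteq R(J)$, so $Q$ is proper by Remark \ref{prop transp}.

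I expect the only genuine subtlety to be bookkeeping: making sure the direct-sum symbol really does describe a topological direct sum inside $R(J)$ (which needs $R(J)$, $R(J)\cap R(Q')$, $R(J)\cap N(Q')$ to be treated with the right topology — but since $J$ is bounded and $\e^*$ carries the dual norm, the intersections are closed and the sum is algebraically direct because the ambient sum in $\e^*$ is), and being careful that $I-Q$ is proper so that both summands land back in $R(J)$. The argument is otherwise a routine application of the transpose-of-a-projection decomposition, mirroring the Hilbert-space proof in \cite[Lemma 2.1]{cms05}, with $A$ replaced by $J$ and Hilbert adjoints replaced by transposes; no new ideas beyond Remark \ref{prop transp} are needed.
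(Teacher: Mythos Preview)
Your argument is correct and follows essentially the same route as the paper: both start from Remark \ref{prop transp} (the range-inclusion $R(Q'J)\subseteq R(J)$) and exploit the direct-sum decomposition $\e^*=R(Q')\,\dot{+}\,N(Q')$ to split $R(J)$. The paper packages the forward direction as the set equality $R(Q'J)=R(J)\cap R(Q')$ (and its analogue for $I-Q$), whereas you argue element-wise; your worry about the direct sum being ``topological'' is unnecessary here, since $\dot{+}$ in the statement denotes only an algebraic direct sum.
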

\begin{proof}
We state in Remark \ref{prop transp} that  $Q$ is proper if and only if $R(Q'J)\subseteq R(J)$. 
Clearly, this is equivalent to the condition $R(J)=R(Q'J) \, + \, R((I-Q')J)$. 

On the other hand, we claim that  $Q$
is proper if and only if $R(Q'J)=R(J)\cap R(Q')$. In fact, note that 
if $R(Q'J)=R(J)\cap R(Q')$, then $R(Q'J)\subseteq R(J)$, which implies that $Q$ is proper. Now  if we suppose that $Q$ is proper, then we have $R(Q'J)\subseteq R(J) \cap R(Q')$. But since $Q'$ is a projection, any functional  $\phi=Jf=Q'\phi$, where $f \in \e$ and $\phi \in R(Q')$, can be written as $\phi=Q'\phi=Q'Jf$.  Therefore we get $R(Q'J)=R(J)\cap R(Q')$. 

Applying the same argument to $I-Q$, we also find that $Q$ is a proper projection if and only if $R((I-Q')J)=R(J)\cap N(Q')$.
It follows  that a projection $Q$ is proper if and only 
if $R(J)=R(Q'J) \, + \, R((I-Q')J)=(R(J)\cap R(Q')) \, \dot{+} \, (R(J) \cap N(Q'))$. 
\end{proof}

We give a characterization of proper subspaces in the following result.

\begin{teo}\label{proper subsp}
Let $\s$ be a closed subspace of $\e$. 
The following conditions are equivalent:
\begin{itemize}
\item[i)] $\s$ is a proper subspace. 
\item[ii)] There exists a projection $Q \in \b(\e)$ such that $R(Q)=\s$ and
  $$ R(J)=(R(J)\cap R(Q')) \, \dot{+} \, (R(J) \cap N(Q')).  $$
\item[iii)] There exists a closed subspace $\mathcal{T}$ of $\e$
such that 
$$ \s \, \dot{+} \, \mathcal{T}= (\s^{\perp} \cap \e ) \, \dot{+} \, (\mathcal{T}^{\perp}\cap \e)=\e.$$
\end{itemize}
\end{teo}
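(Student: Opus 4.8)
The plan is to prove the cycle of implications $\text{i)} \Rightarrow \text{ii)} \Rightarrow \text{iii)} \Rightarrow \text{i)}$.

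\medskip

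\noi\textbf{Step 1: i) $\Rightarrow$ ii).} This is essentially immediate from Lemma \ref{proj range split}. If $\s$ is a proper subspace, there is a proper projection $Q$ with $R(Q)=\s$, and Lemma \ref{proj range split} gives exactly the required direct sum decomposition of $R(J)$.

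\medskip

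\noi\textbf{Step 2: ii) $\Rightarrow$ iii).} Given the projection $Q$ from ii), I would set $\t=N(Q)$. Since $Q\in\b(\e)$ is a bounded projection, $\s\,\dot{+}\,\t=R(Q)\,\dot{+}\,N(Q)=\e$ is automatic. The work is to identify $\s^\perp\cap\e$ and $\t^\perp\cap\e$. Using $\s^\perp\cap\e=J^{-1}(\s^\circ)$ (stated in Section \ref{notation}) together with the standard identities $\s^\circ=R(Q)^\circ=N(Q')$ and $\t^\circ=N(Q)^\circ=\overline{R(Q')}$, I first note that the hypothesis ii) forces $R(J)\cap R(Q')$ and $R(J)\cap N(Q')$ to be complementary in $R(J)$; pulling back through the injective conjugate-linear $J$, this says $J^{-1}(R(J)\cap R(Q'))$ and $J^{-1}(R(J)\cap N(Q'))$ are complementary in $\e$. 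Now $J^{-1}(R(J)\cap N(Q'))=J^{-1}(N(Q'))=\s^\perp\cap\e$ directly. For the other summand I would show $J^{-1}(R(J)\cap R(Q'))=\t^\perp\cap\e$: the decomposition in ii) implies that every element of $R(J)\cap R(Q')$ is fixed by $Q'$, and one checks $R(J)\cap R(Q')\subseteq R(J)\cap\overline{R(Q')}=J(\t^\perp\cap\e)$, while conversely the direct-sum hypothesis yields the reverse inclusion (an element of $\t^\perp\cap\e$ maps under $J$ into $\overline{R(Q')}$, and its $R(Q')$-component in the $R(J)$ decomposition must be itself because the $N(Q')$-component pulls back into $\s^\perp\cap\e$, which meets $\t^\perp\cap\e$ trivially). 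Hence $(\s^\perp\cap\e)\,\dot{+}\,(\t^\perp\cap\e)=\e$.

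\medskip

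\noi\textbf{Step 3: iii) $\Rightarrow$ i).} Suppose $\t$ is a closed subspace with $\s\,\dot{+}\,\t=(\s^\perp\cap\e)\,\dot{+}\,(\t^\perp\cap\e)=\e$. The first decomposition makes $Q:=P_{\s//\t}\in\b(\e)$ well-defined and bounded with $R(Q)=\s$, $N(Q)=\t$. I must show $Q$ is proper, i.e. (Remark \ref{prop transp}) that $R(Q'J)\subseteq R(J)$. Given $f\in\e$, write $f=u+v$ with $u\in\s^\perp\cap\e$, $v\in\t^\perp\cap\e$, using the second decomposition. For any $g\in\e$ write $g=g_\s+g_\t$ with $g_\s\in\s$, $g_\t\in\t$; then $\PI{Qg}{f}=\PI{g_\s}{u+v}=\PI{g_\s}{v}$ since $u\perp\s$, and also $\PI{g}{v}=\PI{g_\s+g_\t}{v}=\PI{g_\s}{v}$ since $v\perp\t$. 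Therefore $\PI{Qg}{f}=\PI{g}{v}$ for all $g\in\e$, which exhibits the $\l$-adjoint action of $Q$ on $f$ as the element $v\in\e$; equivalently $(Q'Jf)(g)=\PI{g}{v}=(Jv)(g)$, so $Q'Jf=Jv\in R(J)$. Thus $Q$ is a proper projection with range $\s$, and $\s$ is a proper subspace (with $Q^+$ the map $f\mapsto v$, the projection onto $\t^\perp\cap\e$ along $\s^\perp\cap\e$).

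\medskip

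\noi\textbf{Main obstacle.} The routine directions are i) $\Rightarrow$ ii) and iii) $\Rightarrow$ i); the delicate part is ii) $\Rightarrow$ iii), specifically the bookkeeping that translates the abstract direct-sum condition on $R(J)$ inside the dual $\e^*$ into the concrete statement about the orthogonal complements $\s^\perp\cap\e$ and $\t^\perp\cap\e$ back in $\e$. Keeping straight which annihilators are closed (e.g. $N(Q)^\circ=\overline{R(Q')}$ rather than $R(Q')$) and verifying that intersecting with $R(J)$ and pulling back by $J$ respects the direct sum is where care is needed. An alternative that sidesteps some of this is to prove iii) $\Rightarrow$ ii) (immediate once $Q=P_{\s//\t}$ is shown proper, via Lemma \ref{proj range split} applied to that $Q$) and ii) $\Rightarrow$ i) (just the definition of proper subspace), reorganizing into i) $\Rightarrow$ ii), ii) $\Rightarrow$ i), i) $\Leftrightarrow$ iii); but either way the substantive content is Step 2's identification of complements, so I would present it as above.
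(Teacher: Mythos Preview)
Your proof is correct, but the paper organizes the equivalences differently and thereby sidesteps your Step~2 entirely. Instead of a cycle, the paper proves $\text{i)} \Leftrightarrow \text{ii)}$ directly from Lemma~\ref{proj range split} (that lemma is an ``if and only if'', so both directions are free), and then proves $\text{i)} \Leftrightarrow \text{iii)}$ separately. For $\text{i)} \Rightarrow \text{iii)}$ one simply takes $\t = N(Q)$ for a proper projection $Q$ with range $\s$, and invokes Remark~\ref{relaciones usuales proj}: the projection $Q^+$ has $R(Q^+)=\t^\perp\cap\e$ and $N(Q^+)=\s^\perp\cap\e$, so the decomposition $\e=R(Q^+)\,\dot{+}\,N(Q^+)$ gives exactly what is needed. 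The converse $\text{iii)} \Rightarrow \text{i)}$ is argued just as in your Step~3. This is essentially the alternative you describe in your final paragraph.

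A small remark on your Step~2: since $Q'$ is a bounded idempotent on $\e^*$, its range $R(Q')=N(I-Q')$ is already closed, so $N(Q)^\circ = R(Q')$ without any closure, and the identification $J^{-1}(R(J)\cap R(Q'))=\t^\perp\cap\e$ is then immediate. Most of the ``delicate bookkeeping'' you anticipate evaporates once this is observed; your argument is sound but heavier than necessary.
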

\begin{proof}
\noi $\text{i)} \Leftrightarrow \text{ii)}$  This follows immediately from Lemma \ref{proj range split}. 

\smallskip

\noi $\text{i)} \Leftrightarrow \text{iii)}$ We suppose that $\s$ is a proper subspace. Then there is a proper projection $Q$ such that $R(Q)=\s$.  
According to Remark \ref{relaciones usuales proj}, we can take $\mathcal{T}=N(Q)$. In fact, we have  $N(Q^+)=\s^{\perp} \cap \e$ and $R(Q^+)=\mathcal{T}^{\perp} \cap \e$. Since $Q^+$ is a projection in $\b(\e)$, we get $(\s^{\perp} \cap \e )\dot{+}(\mathcal{T}^{\perp}\cap \e)=\e$.

Now assume that there is a closed subspace $\mathcal{T}$ satisfying $\s \dot{+} \mathcal{T}= (\s^{\perp} \cap \e )\dot{+} (\mathcal{T}^{\perp}\cap \e)=\e$. Then we can define the continuous projections  $Q=P_{\s // \t}$ and $P=P_{\t^{\perp}\cap \e // \s^{\perp}\cap \e}\,$.  Note that for any $h, f \in \e$, we have 
$$
\PI{Qh}{f} = \PI{Qh}{Pf + (I-P)f}=\PI{Qh}{Pf}=\PI{(I-Q)h + Qh}{Pf}=\PI{h}{Pf}.
$$
This shows that $Q$ and $P$ are proper operators and $Q^+=P$. Hence $\s$ is a proper subspace. 
\end{proof}

If $\s$ is a proper subspace, we have seen that there exists a closed subspace $\t$ of $\e$ 
such that $\s \dot{+} \mathcal{T}= (\s^{\perp} \cap \e )\dot{+} (\mathcal{T}^{\perp}\cap \e)=\e$. We refer to any such subspace $\t$ as a \textit{proper companion of  $\s$}.

\begin{coro}\label{ort proper}
If $\s$ is a proper subspace of $\e$ with a proper companion $\t$. Then $\s^{\perp}\cap \e$, $\t$ and $\t^{\perp}\cap \e$ are  proper subspaces.
\end{coro}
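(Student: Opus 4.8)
The plan is to reduce everything to the characterization of proper subspaces as ranges of proper projections, together with the structure already extracted inside the proof of Theorem \ref{proper subsp}. Since $\t$ is a proper companion of $\s$, we have $\s \dot{+} \t = (\s^{\perp}\cap\e)\dot{+}(\t^{\perp}\cap\e)=\e$, so the oblique projection $Q:=P_{\s // \t}$ is bounded on $\e$, and the computation carried out in the proof of Theorem \ref{proper subsp} (the implication iii) $\Rightarrow$ i)) shows that $Q$ is in fact a proper projection with $Q^{+}=P_{\t^{\perp}\cap\e // \s^{\perp}\cap\e}$.

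Next I would use that $\p$ is a unital involutive Banach algebra. From $Q\in\p$ it follows that $I-Q\in\p$, that $Q^{+}\in\p$, and that $I-Q^{+}=(I-Q)^{+}\in\p$; moreover all of these are idempotent, hence proper projections. It then remains only to identify their ranges: $R(I-Q)=N(Q)=\t$; $R(Q^{+})=\t^{\perp}\cap\e$ by the formula for $Q^{+}$ above; and $R(I-Q^{+})=N(Q^{+})$, which by Remark \ref{relaciones usuales proj} equals $R(Q)^{\perp}\cap\e=\s^{\perp}\cap\e$. Thus $\t$, $\t^{\perp}\cap\e$ and $\s^{\perp}\cap\e$ are each the range of a proper projection, i.e.\ proper subspaces. (An equivalent route, closer to the statement of Theorem \ref{proper subsp} iii), is to exhibit proper companions directly: $\s$ is a proper companion of $\t$ since the defining identities are symmetric in $\s$ and $\t$, while $\s^{\perp}\cap\e$ and $\t^{\perp}\cap\e$ are proper companions of one another, the required identities being read off from $Q^{+}$ being a proper projection with range $\t^{\perp}\cap\e$ and nullspace $\s^{\perp}\cap\e$.)

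I do not expect a genuine obstacle here; the only points needing a little care are that $Q^{+}$ is again a projection, which is immediate from $(Q^{+})^{2}=(Q^{2})^{+}=Q^{+}$, and the bookkeeping of ranges and nullspaces under the $+$ operation, which is exactly what Remark \ref{relaciones usuales proj} supplies. Everything else is a direct application of results already in hand.
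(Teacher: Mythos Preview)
Your proof is correct and essentially identical to the paper's own argument: both take $Q=P_{\s//\t}$, use from the proof of Theorem \ref{proper subsp} that $Q$ is proper with $Q^+=P_{\t^{\perp}\cap\e//\s^{\perp}\cap\e}$, and then read off $\t$, $\t^{\perp}\cap\e$, $\s^{\perp}\cap\e$ as the ranges of the proper projections $I-Q$, $Q^+$, $I-Q^+$. Your extra remarks (checking $(Q^+)^2=Q^+$ and the alternative via proper companions) are sound but not needed beyond what the paper does.
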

\begin{proof}
Let $Q$ and $P$ be the proper projections defined in the proof of Proposition \ref{proper subsp}. The ranges of 
$I-Q$, $P$ and $I-P$ are the subspaces $\t$, $\t^{\perp}\cap \e$ and $\s^{\perp} \cap \e$, respectively. Hence 
these three subspaces are proper.  
\end{proof}

\begin{coro}\label{proper caract}
Let $\s$ be a proper subspace of $\e$. Then the following assertions hold:
\begin{itemize}
\item[i)] If $\t$ is a proper companion  of $\s$, then $\bar{P}_{\s // \t}=P_{\ov{\s}//\ov{\t}}$. 
\item[ii)] $\overline{\s}\cap \e =\s$.
\end{itemize}
\end{coro}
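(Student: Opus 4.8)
The plan is to prove both items simultaneously by extracting information from the bounded extensions of the proper projections constructed in the proof of Theorem~\ref{proper subsp}. Let $\s$ be a proper subspace with proper companion $\t$, and set $Q=P_{\s//\t}$, which is a proper projection by Theorem~\ref{proper subsp}. By Theorem~\ref{proper op}(i), $Q$ extends to a bounded operator $\bar{Q}\in\b(\l)$, and $\bar{Q}$ is again a projection since $\bar{Q}^{\,2}$ and $\bar{Q}$ agree on the dense subspace $\e$ and both are continuous on $\l$. So the first task is to identify $R(\bar{Q})$ and $N(\bar{Q})$.

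For item (i): I would show $R(\bar{Q})=\ov{\s}$ and $N(\bar{Q})=\ov{\t}$. The inclusions $\ov{\s}\subseteq R(\bar{Q})$ and $\ov{\t}\subseteq N(\bar{Q})$ are immediate from continuity ($\bar{Q}$ fixes $\s$ pointwise, hence fixes its $\l$-closure; $\bar{Q}$ kills $\t$, hence kills $\ov{\t}$). For the reverse inclusions, note that $\ov{\s}+\ov{\t}$ is dense in $\l$ (it contains $\s\dot+\t=\e$), and $\bar{Q}+(\overline{I-Q})=I$ on $\l$, so $\l=R(\bar{Q})+N(\bar{Q})\subseteq \ov{\s}+\ov{\t}$; thus $\ov{\s}+\ov{\t}=\l$. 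It remains to see the sum is direct: if $g\in\ov{\s}\cap\ov{\t}$ then $g=\bar{Q}g$ (as $g\in\ov{\s}\subseteq R(\bar Q)$ and $\bar Q$ is a projection) and $\bar{Q}g=0$ (as $g\in\ov{\t}\subseteq N(\bar Q)$), so $g=0$. Hence $\ov{\s}\dot+\ov{\t}=\l$, and then $\bar{Q}$ is a bounded idempotent on $\l$ with range containing $\ov{\s}$ and kernel containing $\ov{\t}$ along a direct sum decomposition into exactly those two subspaces, which forces $\bar{Q}=P_{\ov{\s}//\ov{\t}}$.

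For item (ii): clearly $\s\subseteq\ov{\s}\cap\e$. Conversely, take $f\in\ov{\s}\cap\e$. Then by item (i), $P_{\ov{\s}//\ov{\t}}f=\bar{Q}f=Qf$ (the extension agrees with $Q$ on $\e$), and since $f\in\ov\s=R(P_{\ov\s//\ov\t})$ we get $P_{\ov{\s}//\ov{\t}}f=f$; therefore $Qf=f$, i.e.\ $f\in R(Q)=\s$. This gives $\ov{\s}\cap\e=\s$.

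The main obstacle is the directness of the decomposition $\ov{\s}\dot+\ov{\t}=\l$ — a priori the $\l$-closures could overlap even though $\s\cap\t=\{0\}$ in $\e$. The argument above circumvents any delicate estimate by using the idempotent $\bar Q$ itself: membership in $\ov\s$ forces fixed-point behavior and membership in $\ov\t$ forces annihilation, and these are incompatible unless the vector is zero. Once directness is in hand, the identification of $\bar Q$ with $P_{\ov\s//\ov\t}$ and item (ii) are formal. One should also record, for completeness, that $P_{\ov\s//\ov\t}$ is genuinely bounded on $\l$, which is exactly the content of $\ov\s\dot+\ov\t=\l$ together with the closed graph theorem (or simply the identification with the bounded operator $\bar Q$).
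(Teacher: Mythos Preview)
Your overall strategy is the same as the paper's: extend the proper projection $Q=P_{\s//\t}$ to a bounded idempotent $\bar Q$ on $\l$, identify $R(\bar Q)=\ov\s$ and $N(\bar Q)=\ov\t$, and then deduce item~(ii) exactly as you do. However, there is a slip in your argument for item~(i). After establishing $\ov\s\subseteq R(\bar Q)$ and $\ov\t\subseteq N(\bar Q)$, you write ``$\l=R(\bar Q)+N(\bar Q)\subseteq \ov\s+\ov\t$''. This inclusion does not follow from what you have shown: the inclusions you proved point the other way and yield only the triviality $\ov\s+\ov\t\subseteq R(\bar Q)+N(\bar Q)=\l$. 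Density of $\ov\s+\ov\t$ does not help either, since a dense subspace need not equal $\l$. So the step that is supposed to deliver $\ov\s+\ov\t=\l$ is unjustified as written.

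The fix is immediate and is precisely what the paper does: argue the \emph{reverse} containment $R(\bar Q)\subseteq\ov\s$ directly. For any $g\in\l$, pick $f_n\in\e$ with $f_n\to g$ in $\l$; then $\bar Q g=\lim Qf_n\in\ov\s$ since each $Qf_n\in\s$. Combined with $\s\subseteq R(\bar Q)$ and the fact that $R(\bar Q)$ is closed (range of a bounded idempotent), this gives $R(\bar Q)=\ov\s$; the same reasoning applied to $I-Q$ gives $N(\bar Q)=\ov\t$. Once you have these equalities your detour through showing $\ov\s\dot+\ov\t=\l$ first is unnecessary (though it follows as a consequence), and the rest of your proof of item~(ii) goes through unchanged.
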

\begin{proof}
$\text{i)}$ First note that the bounded projection $Q:=P_{\s // \t}$ is well defined because $\s \, \dot{+} \, \t = \e$. In the proof of Theorem \ref{proper subsp} we have seen that $Q$ is a proper operator. Then, $Q$ has a bounded extension $\bar{Q}$ to the Hilbert space $\l$. 
Note that $\s \subseteq R(\bar{Q})\subseteq \ov{\s}$, and $\bar{Q}$ has closed range, which implies $R(\bar{Q})=\ov{\s}$. Similarly, one can check that $N(\bar{Q})=\ov{\t}$. 
 
\smallskip

\noi $\text{ii)}$ The nontrivial inclusion is $\overline{\s}\cap \e \subseteq \s$. Pick $f \in \overline{\s}\cap \e$. Since $\s$ is a proper subspace,
there is a proper projection $Q$ with range $\s$ and nullspace $\t$. By the first item, we know that $\bar{Q}=P_{\ov{\s}// \ov{\t}}$, so we 
get $f=\bar{Q}f=Qf \in \s$. 
\end{proof}

\begin{coro}\label{relacion supl propers}
Let $\t$ be a proper companion of a proper subspace $\s$ and $G \in \p^{\times}$, then 
$G(\s)$ and $G(\t)$ are proper companions. Moreover, if $\t_1$ is another proper companion of $\s$, then
there exists an operator $G \in \p^{\times}$ such that $G(\t)=\t_1$ and $G(\s)=\s$. 
\end{coro}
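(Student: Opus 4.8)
The plan is to establish both assertions by transporting suitable proper projections. For the first one, I would fix the proper projection $Q$ with $R(Q)=\s$ and $N(Q)=\t$ produced in the proof of Theorem \ref{proper subsp}, and for $G\in\p^{\times}$ consider the operator $GQG^{-1}$. It is idempotent and a product of elements of the algebra $\p$, hence a proper projection; its range is $G(\s)$ and its nullspace is $G(\t)$, both closed subspaces of $\e$ since $G$ is a linear homeomorphism of $\e$. As the proof of Theorem \ref{proper subsp} shows (via Remark \ref{relaciones usuales proj}), the nullspace of any proper projection is a proper companion of its range; applied to $GQG^{-1}$ this gives that $G(\s)$ is a proper subspace with proper companion $G(\t)$. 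Since the defining relation of a proper companion is symmetric in the two subspaces, $G(\s)$ and $G(\t)$ are proper companions of each other.

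For the second assertion, let $\t_1$ be another proper companion of $\s$, and set $Q=P_{\s//\t}$ and $Q_1=P_{\s//\t_1}$. Both are proper projections by the proof of Theorem \ref{proper subsp}, and since $R(Q)=R(Q_1)=\s$ they satisfy $Q_1Q=Q$ and $QQ_1=Q_1$. I would then put
\[ G=I+Q-Q_1\in\p,\qquad H=I+Q_1-Q\in\p, \]
and check, using these two identities, that $GH=HG=I$, which exhibits $G$ as an element of $\p^{\times}$ with inverse $H$. Because $Q$ and $Q_1$ both restrict to the identity on $\s$, one gets $G|_{\s}=I$, so $G(\s)=\s$. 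For $y\in\t=N(Q)$ a direct computation gives $Gy=(I-Q_1)y$, so it remains to see that $I-Q_1$ maps $\t$ bijectively onto $\t_1$: injectivity follows from $\s\cap\t=\{0\}$, since $(I-Q_1)y=0$ with $y\in\t$ forces $y=Q_1y\in\s$; surjectivity follows from $\e=\s\,\dot{+}\,\t$, since writing $t_1\in\t_1$ as $t_1=s+t$ with $s\in\s$ and $t\in\t$ gives $(I-Q_1)t=(I-Q_1)t_1-(I-Q_1)s=t_1$. Hence $G(\t)=\t_1$, and $G$ is the desired operator.

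The calculations here are routine; the step I expect to need the most care is checking that $G=I+Q-Q_1$ belongs to $\p^{\times}$, which hinges precisely on the relations $Q_1Q=Q$ and $QQ_1=Q_1$ forced by $R(Q)=R(Q_1)=\s$. Once $G^{-1}=H\in\p$ is available, invertibility of $G^{+}$ is automatic from $G^{+}H^{+}=H^{+}G^{+}=I$, so nothing extra is needed there --- though it is worth noting that $Q^{+}$ and $Q_1^{+}$ instead share the \emph{nullspace} $\s^{\perp}\cap\e$, which is the symmetric counterpart of the equality of ranges used above. The other thing to keep in mind is the fact, implicit in the proof of Theorem \ref{proper subsp}, that the nullspace of a proper projection is automatically a proper companion of its range; this is exactly what makes the first assertion immediate.
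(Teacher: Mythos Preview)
Your argument is correct. The first assertion is handled exactly as in the paper, by conjugating the proper projection $Q=P_{\s//\t}$ with $G$.

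For the second assertion the constructed operator is in fact the same as the paper's: the paper writes $G=P_{\s//\t}+P_{\t_1//\s}\,P_{\t//\s}=Q+(I-Q_1)(I-Q)$, and since $Q_1Q=Q$ this simplifies to $I+Q-Q_1$, which is precisely your $G$. The difference lies in how invertibility in $\p$ is established. The paper computes $G^+$ explicitly as a combination of oblique projections onto the various subspaces $\s^\perp\cap\e$, $\t^\perp\cap\e$, $\t_1^\perp\cap\e$, and then checks by hand that $G^+$ is injective and surjective on $\e$. Your observation that $H=I+Q_1-Q\in\p$ satisfies $GH=HG=I$ (thanks to $Q_1Q=Q$ and $QQ_1=Q_1$) gives $G^{-1}\in\p$ directly, and the invertibility of $G^+$ then follows formally from $(GH)^+=(HG)^+=I$. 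This is a cleaner route: it avoids any explicit description of $G^+$ and any case-by-case verification of surjectivity. The paper's approach, on the other hand, yields an explicit formula for $G^+$ in terms of the oblique projections associated with the orthogonal complements, which is used later (in the proof of Corollary~\ref{two companions}).
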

\begin{proof}
 For the first assertion, we only have to  note that the projection $P=GP_{\s // \t}G^{-1}$ is a proper operator with range 
$G(\s)$ and nullspace $G(\t)$. In order to show the second assertion, consider the bounded operator
\[ G_0= (P_{\t_1 // \s})|_{\t} : \t \to \t_1 . \]
It is easy to check that $G_0$ is an isomorphism. Then the operator defined by 
$$G(f_1 + f_2)=f_1 + G_0f_2,  \, \, \, \, f_1 \in \s,\, f_2 \in \t,$$   
is invertible on $\e$, $G(\t)=\t_1$ and $G(\s)=\s$. To show that $G$ is a proper operator,  we note that it can be rewritten as
$$  G=P_{\s // \t} + P_{\t_1 // \s} \, P_{\t// \s}. $$
Since each projection in this expression is a proper operator, we get that $G$ is a proper operator  and 
$$ G^+= P_{ \t^{\perp}\cap \e // \s^{\perp}\cap \e}+ P_{\s^{\perp}\cap \e// \t ^{\perp} \cap \e} \, P_{\s^{\perp}\cap \e// \t_1 ^{\perp} \cap \e}.$$
 What remains is to prove that $G$ is invertible in the Banach algebra $\p$. Since $G$ is invertible on $\e$, we have to show that $G^+$ is invertible  on $\e$. Clearly, $G^+$ is injective. To see that $G^+$ is surjective, given $g \in \e$, write $g=g_1 + g_2$, where $g_1 \in \s^{\perp}\cap \e$ and $g_2 \in \t^{\perp} \cap \e$. Then note that one can  also  write $g_2=g_{2,1} + g_{2,2}$, where 
$g_{2,1} \in \s^{\perp}\cap \e$    and $g_{2,2} \in \t_1^{\perp} \cap \e$. Therefore, the vector $f=g - g_{2,1}$ satisfies $G^+f=g$. 
\end{proof}

\section{Proper and compatible subspaces}\label{proper and comp compan}

A closed subspace $\s \subseteq \e$ is called a $\textit{compatible subspace}$ if there exists a proper projection  $Q$ such that $Q=Q^+$ and $R(Q)=\s$.   The following elementary characterizations of compatible subspaces will be useful later.


\begin{lem}\label{comp caract}
Let $\s$ be a closed subspace of $\e$. The following conditions are equivalent:
\begin{itemize}
\item[i)] $\s$ is compatible.
\item[ii)] $\s \, \dot{+} \, (\s^{\perp} \cap \e)= \e$.
\item[iii)]  There exists a proper projection $Q$ such that $R(Q)=\s$ and $N(Q)\subseteq \s^{\perp} \cap \e$. 
\item[iv)] $R(J)=J(\s) \, \dot{+} \,  (R(J) \cap \s^{\circ}) $.
\item[v)] $\ov{\s}\cap \e=\s$ and $P_{\ov{\s}}(\e)\subseteq \e$.  
\end{itemize}
\end{lem}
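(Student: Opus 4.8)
The plan is to establish the chain of equivalences by proving $(i) \Leftrightarrow (ii)$, then $(ii) \Leftrightarrow (iii)$, then $(i) \Leftrightarrow (iv)$, and finally $(i) \Leftrightarrow (v)$, using the machinery already developed. For $(i) \Rightarrow (ii)$: if $\s$ is compatible, take a proper self-adjoint projection $Q$ with $R(Q)=\s$; then by Remark \ref{relaciones usuales proj} we have $N(Q)=N(Q^+)=R(Q)^{\perp}\cap\e=\s^{\perp}\cap\e$, and since $\s\dot{+}N(Q)=\e$ (as $Q$ is a bounded projection), $(ii)$ follows. For $(ii) \Rightarrow (iii)$: if $\s\dot{+}(\s^{\perp}\cap\e)=\e$, note that $\s^{\perp}\cap\e$ is automatically a proper companion of $\s$ — indeed $\s^{\perp}\cap\e$ is closed in $\e$ (stated in Section \ref{notation}), and $(\s^{\perp}\cap\e)^{\perp}\cap\e\supseteq\s$; in fact one checks $\s\dot{+}(\s^{\perp}\cap\e)=\e$ together with the analogous identity for the companion role, so Theorem \ref{proper subsp}(iii) applies with $\t=\s^{\perp}\cap\e$, giving a proper projection $Q=P_{\s//\s^{\perp}\cap\e}$ with $N(Q)=\s^{\perp}\cap\e$. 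Then $(iii)$ is immediate. For $(iii)\Rightarrow (i)$: given a proper projection $Q$ with $R(Q)=\s$ and $N(Q)\subseteq \s^{\perp}\cap\e$, use Lemma \ref{relaciones usuales}(i) applied to $Q$: $N(Q^+)=R(Q)^{\perp}\cap\e=\s^{\perp}\cap\e$. Since $Q^+$ is a proper projection with $R(Q^+)=N(Q)^{\perp}\cap\e$ (Remark \ref{relaciones usuales proj}) and $N(Q^+)=\s^{\perp}\cap\e$, from $N(Q)\subseteq\s^{\perp}\cap\e=N(Q^+)$ and the fact that both $Q,Q^+$ are projections onto related spaces, I would argue $Q=Q^+$: more directly, $N(Q)\subseteq\s^{\perp}\cap\e$ forces $\s\dot{+}(\s^{\perp}\cap\e)=\e$ (since $\s\dot{+}N(Q)=\e$ and $N(Q)\subseteq\s^{\perp}\cap\e\subseteq\e$), so we are back in situation $(ii)$, and the projection $P_{\s//\s^{\perp}\cap\e}$ is self-adjoint by the computation $\PI{P_{\s//\s^{\perp}\cap\e}h}{f}=\PI{h}{P_{\s//\s^{\perp}\cap\e}f}$ carried out exactly as in the proof of Theorem \ref{proper subsp}(iii), splitting both $h$ and $f$ and using orthogonality of $\s$ and $\s^{\perp}\cap\e$.

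For $(i)\Leftrightarrow(iv)$: recall from Section \ref{notation} that $\s^{\perp}\cap\e=J^{-1}(\s^{\circ})$, equivalently $J(\s^{\perp}\cap\e)=R(J)\cap\s^{\circ}$. A compatible projection $Q$ is in particular proper, so by Lemma \ref{proj range split} $R(J)=(R(J)\cap R(Q'))\dot{+}(R(J)\cap N(Q'))$. One identifies $R(Q')$ and $N(Q')$ when $Q=Q^+$: properness plus self-adjointness translates (Remark \ref{prop transp} and the $J$-intertwining $Q'J=JQ^+=JQ$) into $R(Q'J)=J(\s)$ and $R((I-Q')J)=J(N(Q))=J(\s^{\perp}\cap\e)=R(J)\cap\s^{\circ}$, which gives $(iv)$. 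Conversely, if $(iv)$ holds, the decomposition $R(J)=J(\s)\dot{+}(R(J)\cap\s^{\circ})$ pulls back under $J$ (which is injective) to $\e=\s\dot{+}(\s^{\perp}\cap\e)$, i.e. $(ii)$, hence $(i)$.

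For $(i)\Leftrightarrow(v)$: if $\s$ is compatible it is proper, so $\ov{\s}\cap\e=\s$ is Corollary \ref{proper caract}(ii). Moreover, taking $\t=\s^{\perp}\cap\e$, Corollary \ref{proper caract}(i) gives $\bar{Q}=P_{\ov{\s}//\ov{\s^{\perp}\cap\e}}$; but $\ov{\s^{\perp}\cap\e}\subseteq\s^{\perp}$ and since $Q=Q^+$ means $\bar{Q}$ is self-adjoint on $\l$, $\bar{Q}$ is an orthogonal projection, forcing $\ov{\s^{\perp}\cap\e}=\ov{\s}^{\perp}$ and $\bar{Q}=P_{\ov{\s}}$; then $P_{\ov{\s}}(\e)=\bar{Q}(\e)=Q(\e)=\s\subseteq\e$. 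Conversely, assume $(v)$. The operator $P_{\ov{\s}}|_{\e}$ maps $\e$ into $\e$ by hypothesis and is a bounded projection there (boundedness on $\e$ from the closed graph theorem, using $\|\cdot\|_{\l}\le\|\cdot\|_{\e}$ and that $P_{\ov{\s}}$ is bounded on $\l$); its range is $\ov{\s}\cap\e=\s$ and it is manifestly symmetric for $\PI{\,}{\,}$ since $P_{\ov{\s}}$ is self-adjoint on $\l$. Hence $P_{\ov{\s}}|_{\e}$ is a proper self-adjoint projection with range $\s$, so $\s$ is compatible.

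The main obstacle I expect is the bookkeeping in $(iii)\Rightarrow(i)$ and in the $J$-transpose computations for $(iv)$: one must be careful that $N(Q)\subseteq\s^{\perp}\cap\e$ does not by itself say $Q=Q^+$, and the clean route is to observe it forces the direct-sum decomposition $(ii)$ and then invoke the explicit self-adjoint projection $P_{\s//\s^{\perp}\cap\e}$ rather than trying to show the original $Q$ is self-adjoint. Likewise, in $(v)$ the only real subtlety is justifying that $P_{\ov{\s}}|_{\e}$ is $\|\cdot\|_{\e}$-bounded, which follows from the closed graph theorem once we know it maps $\e$ to $\e$.
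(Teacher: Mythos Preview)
Your proposal is correct and, where comparable, follows the same line as the paper. For $(i)\Leftrightarrow(ii)$ you argue exactly as the paper does; for $(iii)$, $(iv)$, $(v)$ the paper simply cites results of Krein and \cite{cms05}, \cite{anchdi}, whereas you supply explicit self-contained arguments (the $J$-intertwining $Q'J=JQ^+$ for $(iv)$, the closed-graph argument for $(v)$), which is a genuine gain in readability. Two minor remarks: in $(ii)\Rightarrow(iii)$ the detour through Theorem~\ref{proper subsp} is unnecessary, since the computation $\PI{P_{\s//\s^\perp\cap\e}h}{f}=\PI{h}{P_{\s//\s^\perp\cap\e}f}$ you later invoke already shows directly that $P_{\s//\s^\perp\cap\e}$ is proper; and in $(iii)\Rightarrow(i)$ you correctly identify that the clean route is via $(ii)$ rather than trying to prove $Q=Q^+$---the inclusion $N(Q)\subseteq\s^\perp\cap\e$ together with $\s\dot{+}N(Q)=\e$ and $\s\cap(\s^\perp\cap\e)=\{0\}$ indeed forces $\s\dot{+}(\s^\perp\cap\e)=\e$.
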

\begin{proof}
$\text{i)} \Leftrightarrow \text{ii)}$ Suppose that $\s$ is compatible subspace. Then there is a proper projection $Q$ such that $Q=Q^+$ and $R(Q)=\s$. Using Remark \ref{relaciones usuales proj}, we get $N(Q)=N(Q^+)=\s^\perp \cap \e$, which yields $\e=R(Q) \dot{+} N(Q) = \s \dot{+} (\s^\perp \cap \e)$. To prove the converse, now assume that $\s \, \dot{+} \, (\s^{\perp} \cap \e)= \e$. Then the projection $Q=P_{\s// \s^\perp \cap \e}$ is continuous on $\e$, and note that $\PI{Qh}{f}=\PI{Qh}{Qf}=\PI{h}{Qf}$ for all $f,h \in \e$. Thus, $Q$ is a proper projection, $R(Q)=\s$ and $Q^+=Q$.

\smallskip

\noi $\text{i)} \Leftrightarrow \text{iii)}$  This is a direct consequence of a result of Krein \cite{krein a}. We refer to \cite[Lemma 2.5]{cms05} for a proof when $\e$ is a Hilbert space. It is not difficult to see that this proof  can also be  carried out in the Banach setting.

\smallskip

\noi $\text{i)} \Leftrightarrow \text{iv)}$ We only need to follow the proof in \cite[Prop. 2.14 2.]{cms05}, taking into account the map $J$ that shows up in the Banach setting.

\smallskip

\noi $\text{i)} \Leftrightarrow \text{v)}$ This was proved in  \cite[Prop. 3.5]{anchdi} in the setting of Hilbert spaces. The proof in our setting goes exactly the same line.
\end{proof}

\begin{rem}
If $\s$ is a compatible subspace, there exists a unique proper projection $Q$ such that $Q^+=Q$ and $R(Q)=\s$. This follows immediately 
from the fact that $Q$ is uniquely determined as $(P_{\ov{\s}})|_{\e}$. We denote this projection by $Q_\s$. 
\end{rem}

Note that in Example \ref{dim rgo finita} we actually show that every finite dimensional subspace is compatible. Subspaces of finite codimension in $\e$ may not be compatible or proper, but both notions coincide for this type of subspaces. 

\begin{prop}\label{equi cod fin}
Let $\s$ be a closed subspace of $\e$ with finite codimension. Then $\s$ is a proper subspace if and only if $\s$ is a compatible subspace. 
\end{prop}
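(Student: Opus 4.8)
The nontrivial implication is that a proper subspace of finite codimension is compatible, since every compatible subspace is trivially proper. Let $\s$ be proper with $\mathrm{codim}_\e(\s)=n<\infty$. By Theorem \ref{proper subsp}, $\s$ has a proper companion $\t$, and since $\s\,\dot{+}\,\t=\e$ we have $\dim\t=n$. The idea is to leverage Corollary \ref{proper caract} together with characterization (v) of Lemma \ref{comp caract}: we already know $\ov{\s}\cap\e=\s$ from Corollary \ref{proper caract}(ii), so it remains only to prove $P_{\ov{\s}}(\e)\subseteq\e$. Equivalently, by Lemma \ref{comp caract}(ii), it suffices to show $\s\,\dot{+}\,(\s^{\perp}\cap\e)=\e$.

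First I would analyze $\s^{\perp}\cap\e$. By Corollary \ref{ort proper}, $\s^{\perp}\cap\e$ is itself a proper subspace, with proper companion $\t^{\perp}\cap\e$ (these are the ranges of $I-P$ and $P$ in the notation of the proof of Theorem \ref{proper subsp}). In particular $\s^{\perp}\cap\e\,\dot{+}\,(\t^{\perp}\cap\e)=\e$, so $\dim(\t^{\perp}\cap\e)=\mathrm{codim}_\e(\s^{\perp}\cap\e)$. Now the key finite-dimensional input: since $\ov{\t}$ is finite dimensional (being the closure in $\l$ of the $n$-dimensional space $\t$, hence still $n$-dimensional) and $\t\subseteq\e$ is already closed in $\l$, we get $\ov{\t}=\t$; thus $\t$ is a genuine finite-dimensional subspace of the Hilbert space $\l$. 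By Example \ref{dim rgo finita} applied to $\l$ — or simply because finite-dimensional subspaces are compatible — $\t$ is compatible, so $\t\,\dot{+}\,(\t^{\perp}\cap\e)=\e$, giving $\dim(\t^{\perp}\cap\e)=n$. Feeding this back, $\mathrm{codim}_\e(\s^{\perp}\cap\e)=n$ as well.

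Next I would show $\s\cap(\s^{\perp}\cap\e)=\{0\}$: if $f\in\s$ and $f\in\s^{\perp}$ then $\PI{f}{f}=0$, so $f=0$ — this is immediate and uses only that $\PI{\,}{\,}$ is an inner product. So $\s\,\dot{+}\,(\s^{\perp}\cap\e)$ is a direct sum; call it $\mathcal{M}$. It remains to see $\mathcal{M}=\e$. A clean dimension count does it: working inside $\l$, pass to closures. We have $\ov{\mathcal{M}}\supseteq\ov{\s}$, and also $\mathcal{M}$ contains $\s^{\perp}\cap\e$ which is dense in $\s^{\perp}$ (indeed $\s^{\perp}\cap\e=J^{-1}(\s^{\circ})$ is dense in $\s^{\perp}$ because $\e$ is dense in $\l$ and $P_{\ov{\s}^\perp}=P_{\s^\perp}$ is continuous), so $\ov{\mathcal{M}}\supseteq\s^{\perp}$; since $\ov{\s}$ has codimension at most $n$ in $\l$ (it is dense-closure of $\s$... more carefully: $\ov{\s}^{\,\perp}=\s^{\perp}$ and $\dim\s^\perp$ may be infinite, so instead argue $\ov{\mathcal{M}}\supseteq\ov{\s}+\s^\perp=\l$), hence $\ov{\mathcal{M}}=\l$. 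On the other hand $\mathcal{M}$ is closed in $\e$ — being the algebraic sum of a closed subspace with a finite-dimensional one, the sum is closed and the projection onto it along a finite-dimensional complement is bounded — so $\mathcal{M}$ is a closed subspace of $\e$ of finite codimension $\le 2n$, and being dense in $\l$ while $\ov{\mathcal{M}}\cap\e$... Here I would instead finish via Corollary \ref{proper caract}(ii)-type reasoning: since $\s^{\perp}\cap\e$ is proper, $\ov{\s^{\perp}\cap\e}\cap\e=\s^{\perp}\cap\e$, and one computes $\ov{\s^\perp\cap\e}=\s^\perp$; then any $f\in\e$ decomposes in $\l$ as $f=P_{\ov\s}f+P_{\s^\perp}f$ and the point is to show both summands lie in $\e$. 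Because $\s^\perp=\ov{\s^\perp\cap\e}$ and... the cleanest route is: $\mathcal{M}$ is closed in $\e$ of finite codimension, $\mathcal{M}\supseteq\s$, and $\e/\mathcal{M}$ embeds into $\ov{\e}/\ov{\mathcal{M}}=\l/\l=0$, forcing $\mathcal{M}=\e$.

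\textbf{Main obstacle.} The delicate point is the last density/closedness step — concluding $\mathcal{M}=\e$ from $\mathcal{M}$ being closed of finite codimension in $\e$ and dense in $\l$. This requires knowing $\ov{\mathcal{M}}=\l$, which in turn needs $\ov{\s^\perp\cap\e}=\s^\perp$ in $\l$; that identity follows since $\s^\perp\cap\e=J^{-1}(\s^\circ)$ and $J$ is a homeomorphism onto its image with $\s^\circ$ the annihilator, but one must be careful that density of $\e$ in $\l$ does give density of $\s^\perp\cap\e$ in $\s^\perp$ (true: apply the continuous orthogonal projection $P_{\s^\perp}$ to a sequence in $\e$ converging to a given element of $\s^\perp$). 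Once $\ov{\mathcal{M}}=\l$ is secured, finite codimension of the closed subspace $\mathcal{M}\subseteq\e$ plus density in $\l$ forces $\mathcal{M}=\e$ by the quotient argument above, and then Lemma \ref{comp caract}(ii) yields compatibility of $\s$.
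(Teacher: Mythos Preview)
Your approach has the right idea but executes the key dimension count backwards, and then tries to compensate with arguments that are circular or unjustified. The mis-step is the line ``$\t\,\dot{+}\,(\t^{\perp}\cap\e)=\e$, giving $\dim(\t^{\perp}\cap\e)=n$'': compatibility of the $n$-dimensional space $\t$ gives $\mathrm{codim}_\e(\t^\perp\cap\e)=n$, not $\dim(\t^\perp\cap\e)=n$. Combined with $(\s^\perp\cap\e)\,\dot{+}\,(\t^\perp\cap\e)=\e$, the correct conclusion is $\dim(\s^\perp\cap\e)=\mathrm{codim}_\e(\t^\perp\cap\e)=n$. At that point you are done immediately: since $\s\cap(\s^\perp\cap\e)=\{0\}$ and $\mathrm{codim}_\e\s=n$, the $n$-dimensional subspace $\s^\perp\cap\e$ is a full supplement, so $\s\,\dot{+}\,(\s^\perp\cap\e)=\e$, and Lemma~\ref{comp caract}\,ii) finishes. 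Your detour through density and quotients is unnecessary, and in fact contains two gaps. First, to show $\ov{\s^\perp\cap\e}=\s^\perp$ you apply $P_{\s^\perp}$ to approximants in $\e$; but $P_{\s^\perp}(\e)\subseteq\e$ is equivalent to $P_{\ov\s}(\e)\subseteq\e$, which is precisely what you are trying to establish, so this is circular. Second, the claim that $\e/\mathcal{M}$ embeds into $\l/\ov{\mathcal{M}}$ is not automatic: the kernel of the natural map is $(\ov{\mathcal{M}}\cap\e)/\mathcal{M}$, which you have not shown to vanish.

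For comparison, the paper reaches the same intermediate fact $\dim(\s^\perp\cap\e)=n$ by writing the finite-rank projection $P=I-Q$ explicitly as in Example~\ref{dim rgo finita}, which identifies $\s^\perp\cap\e=R(P^+)$ as an $n$-dimensional span; since also $\dim\s^\perp=n$ (because $\ov\s\,\dot{+}\,\t=\l$), one obtains the stronger statement $\s^\perp=\s^\perp\cap\e\subseteq\e$, whence $P_{\ov\s}=I-P_{\s^\perp}$ preserves $\e$ and Lemma~\ref{comp caract}\,v) applies. Your route via compatibility of the finite-dimensional companion $\t$ is a legitimate and slightly more conceptual alternative---it avoids the explicit projection formula---but it only works once the dimension/codimension bookkeeping is set right.
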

\begin{proof}
The ``if" part is trivial. To prove the ``only if" part, suppose  that $\s$ is a proper subspace. Any supplement of $\s$ in $\e$ has to be finite dimensional. In particular, a proper companion $\t$ is finite dimensional, and then $\ov{\t}=\t$. Let $Q$ be a proper projection such that $R(Q)=\s$. Then $P=I-Q$ is a proper projection with range $\t$. In Example \ref{dim rgo finita} we see that $P$ can be described by formula (\ref{proj finito}). Note that  $R(P)=\t=\text{span} \{ \,   f_1 \, , \, \ldots \, , \, f_n      \,  \}$ and $R(P^+)=N(P)^{\perp}\cap \e=\s^{\perp} \cap \e=\text{span} \{ \, h_1 \, \ldots  \, , \, h_n \, \}$. From these facts, we get $\dim \t=\dim \s^{\perp} \cap \e$.

On the other hand, recall that  $\ov{\s} \, \dot{+} \, \t=\l$ by Corollary \ref{proper caract}. Since $\s^{\perp}$ is  a supplement for $\ov{\s}$ in $\l$, it follows that $\dim \t= \dim \s^{\perp}$. Therefore, we obtain that $\dim \s^{\perp}=\dim \s^{\perp} \cap \e$. Hence $\s^{\perp}=\s^{\perp} \cap \e$.

To prove that $\s$ is compatible, it suffices to show that  $\overline{\s}\cap \e =\s$ and $P_{\ov{\s}}(\e)\subseteq \e$  by Lemma \ref{comp caract}. According to  Corollary \ref{proper caract}, we have $\overline{\s}\cap \e =\s$. 
To prove the second condition,  we use that $\s^{\perp}=\s^{\perp} \cap \e \subseteq \e$. We thus get $P_{\ov{\s}}(\e)=(I-P_{\s^{\perp}})(\e)=(I-P_{\s^{\perp} \cap \e})(\e)\subseteq \e$.  This completes the proof.
\end{proof}

\begin{ejem}\label{no prop no comp}
Given a vector $g \in \l \backslash \e$, $\|g\|_\l=1$, the subspace 
$$\s=\{ \, f \in \e \, : \, \PI{f}{g}=0 \,  \}=\{ \, g \, \}^{\perp} \cap \e$$
is neither a compatible  nor a proper subspace. Note that $\s$ has finite codimension, so it is enough to prove that 
$\s$ is not compatible by Proposition \ref{equi cod fin}. The first condition in Lemma \ref{comp caract} \text{v)}, that is, $\ov{\s}\cap \e =\s$, clearly holds for this subspace.  On the other hand, the orthogonal projection onto $\ov{\s}$ is given by 
$$  P_{\ov{\s}}(f)=f - \PI{f}{g}g.$$
Apparently,   it follows that $P_{\ov{\s}}(\e) \not\subseteq \e$ by our choice of the function $g$. Hence $\s$ is not compatible. 
\end{ejem}

\begin{ejem}
In \cite[Example 4.3]{cms02} the authors gave the following  example of a non compatible subspace. 
Let $A$ be a positive injective non invertible operator acting on $\e=\h$. As usual,  $\l$ is the Hilbert space obtained by completing $\h$ with respect to the inner product $\PI{f}{h}_A=\PI{Af}{h}_\h$.  Pick any vector $g \in \h \setminus R(A)$. Then 
they proved that the subspace $\s=\{ \, g \, \}^{\perp_{\h}}$ is not compatible. 
Furthermore, the subspace $\s$ has finite codimension in $\h$. Thus, by Proposition \ref{equi cod fin}, we also know 
that $\s$ is not a proper subspace.
\end{ejem}

We shall need the following algebraic result by Maestripieri. Its proof can be found in \cite[Prop. 2.8]{acg}. 

\begin{lem}\label{algebraic lem}
Let $T_1 , T_2 \in \b(\e)$  such that $R(T_1)\cap R(T_2)=\{0\}$. 
Then
$
\e=N(T_1)+ N(T_2)
$
if and only if
$
R(T_1)+ R(T_2) =R(T_1+T_2).
$
\end{lem}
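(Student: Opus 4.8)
The plan is to prove both implications by constructing explicit supplementary subspaces and invoking the direct-sum hypotheses.

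\medskip

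\textbf{Proof.} First suppose that $\e = N(T_1) + N(T_2)$. Since $R(T_1) \cap R(T_2) = \{0\}$, the sum $R(T_1) + R(T_2)$ is direct, so it is clear that $R(T_1 + T_2) \subseteq R(T_1) + R(T_2)$. For the reverse inclusion, take $y = T_1 x_1 + T_2 x_2$ with $x_1, x_2 \in \e$. Write $x_1 - x_2 = u_1 + u_2$ with $u_i \in N(T_i)$, using the hypothesis. Set $x = x_1 - u_1 = x_2 + u_2$. Then $T_1 x = T_1 x_1$ and $T_2 x = T_2 x_2$, so $(T_1 + T_2) x = T_1 x_1 + T_2 x_2 = y$. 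Hence $R(T_1) + R(T_2) = R(T_1 + T_2)$.

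\medskip

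Conversely, suppose $R(T_1) + R(T_2) = R(T_1 + T_2)$, the sum on the left being direct by the hypothesis $R(T_1) \cap R(T_2) = \{0\}$. Given $x \in \e$, consider $T_1 x \in R(T_1) \subseteq R(T_1) + R(T_2) = R(T_1 + T_2)$, so there is $z \in \e$ with $(T_1 + T_2) z = T_1 x$, that is, $T_1 z + T_2 z = T_1 x$. Then $T_1(x - z) = T_2 z \in R(T_1) \cap R(T_2) = \{0\}$, so $T_1(x - z) = 0$ and $T_2 z = 0$; thus $x - z \in N(T_1)$ and $z \in N(T_2)$, whence $x = (x - z) + z \in N(T_1) + N(T_2)$. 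Therefore $\e = N(T_1) + N(T_2)$. $\qquad\blacksquare$

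\medskip

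The argument is purely algebraic and uses only the disjointness of the ranges together with the equivalence between splitting a sum element and adjusting representatives by kernel vectors; there is no real obstacle, the only point requiring a little care is keeping track of which direct sum is assumed versus deduced in each direction, and noting that in the forward direction the decomposition of $x_1 - x_2$ is exactly what lets us replace two arbitrary preimages $x_1, x_2$ by a single common preimage $x$.
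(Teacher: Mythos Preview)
Your proof is correct. The paper does not actually prove this lemma; it simply cites \cite[Prop.~2.8]{acg} (Arias--Corach--Gonzalez), so your self-contained argument in fact supplies what the paper omits. One cosmetic remark: the inclusion $R(T_1+T_2)\subseteq R(T_1)+R(T_2)$ holds trivially for any two operators and does not use the disjointness of the ranges, so the phrase ``Since $R(T_1)\cap R(T_2)=\{0\}$, the sum is direct, so it is clear that \ldots'' slightly misattributes the reason; the disjointness is only needed in the converse direction (and, in the forward direction, is not used at all).
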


\begin{rem}\label{alg lem rem}
It will be useful to state the last condition of the above lemma in a slightly different way. We notice  that $R(T_1)+ R(T_2) =R(T_1+T_2)$ if and only if $R(T_1) \subseteq R(T_1+T_2)$ (see \cite[Prop. 2.4]{acm15}).  
\end{rem}

Let $\s$ be a proper subspace of $\e$ and $\t$ a proper companion. As we shall see, the compatibility
of these subspaces is related to properties of the following symmetrizable operator 
\[  \c=P_{\s // \t} + P_{\s // \t}^+ - I.  \]
We observe that its extension $\cl$ is invertible on $\l$ if and only if $\bar{V} + \bar{V}^*$ is invertible on $\l$,
where $V=2P_{\s // \t}-I$. Since $V$ is a symmetry, this is equivalent to  $-1 \notin \sigma_\l(\bar{V}^*\bar{V})$, 
which clearly holds since $\bar{V}^*\bar{V}$ is positive on $\l$. Thus, $\cl$ is invertible on $\l$. In particular, $\c$ is 
injective as an operator on $\e$.

Our main result to decide when a proper subspace is compatible now follows. 
Its proof is based on Lemma \ref{algebraic lem}. This idea has been used in \cite[Prop. 2.9]{acg} to relate compatible subspaces in Hilbert spaces and Bott-Duffin inverses.  

\begin{teo}\label{charac prop compatible s}
Let $\s$ be a proper subspace of $\e$ and $\t$ a proper companion of $\s$. The following 
assertions are equivalent:
\begin{itemize}
\item[i)] $\s$ is a compatible subspace. 
\item[ii)] $  \t \, \dot{+} \,(\t^{\perp} \cap \e) = R(\c)$.
\item[iii)] $\t^{\perp} \cap \e \subseteq R(\c)$.
\item[iv)]  $\t \subseteq R(\c)$.
\end{itemize}
If any of these statements is satisfied, the unique proper projection $Q_\s$ such that $Q_\s=Q_\s^+$ and $R(Q_\s)=\s$ is given by
$$
Q_\s=\c^{-1} \, P_{\s // \t}^+.
$$
\end{teo}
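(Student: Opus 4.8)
The plan is to derive the four equivalences from the algebraic Lemma~\ref{algebraic lem} (and its reformulation in Remark~\ref{alg lem rem}) applied to a carefully chosen splitting of $\c$ as a sum of two projections, and then to establish the formula for $Q_\s$ by a short computation with the operator identities satisfied by $\c$ and $Q:=P_{\s // \t}$.

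For the bookkeeping I record that $R(Q)=\s$, $N(Q)=\t$, and, by Remark~\ref{relaciones usuales proj}, $R(Q^+)=\t^{\perp}\cap\e$ and $N(Q^+)=\s^{\perp}\cap\e$. The key is the splitting $\c=Q^+ + (Q-I)$: put $T_1=Q^+$ and $T_2=Q-I$, so that $R(T_1)=\t^{\perp}\cap\e$, $N(T_1)=\s^{\perp}\cap\e$, $R(T_2)=\t$ and $N(T_2)=\s$. Since $\t\cap\t^{\perp}=\{0\}$ in $\l$, one has $R(T_1)\cap R(T_2)=\{0\}$, so Lemma~\ref{algebraic lem} applies: $\e=N(T_1)+N(T_2)=(\s^{\perp}\cap\e)+\s$ if and only if $R(\c)=R(T_1)+R(T_2)=(\t^{\perp}\cap\e)+\t$. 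Both sums are automatically direct (because $\s\cap\s^{\perp}=\{0\}$ and $\t\cap\t^{\perp}=\{0\}$), so by Lemma~\ref{comp caract}(ii) the left-hand condition is precisely ``$\s$ is compatible'' and the right-hand condition is precisely (ii); this yields (i)$\Leftrightarrow$(ii). For (ii)$\Leftrightarrow$(iii) and (ii)$\Leftrightarrow$(iv) I would invoke Remark~\ref{alg lem rem}: $R(\c)=R(T_1)+R(T_2)$ holds iff $R(T_1)\subseteq R(\c)$; taking $T_1=Q^+$ gives the condition $\t^{\perp}\cap\e\subseteq R(\c)$, while taking $T_1=Q-I$ (and $T_2=Q^+$) gives $\t\subseteq R(\c)$.

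For the formula, assume one (hence all) of (i)--(iv) holds, so $R(\c)=\t\,\dot{+}\,(\t^{\perp}\cap\e)=\e$. Since $\c$ is injective on $\e$ (noted just before the theorem), $\c\in Gl(\e)$, and as $\c^{+}=\c$, Remark~\ref{notions of spec} gives $\c\in\p^{\times}$ with $(\c^{-1})^{+}=\c^{-1}$. Expanding $\c=Q+Q^{+}-I$ and using $Q^{2}=Q$, $(Q^{+})^{2}=Q^{+}$, one checks
\[
\c Q = Q^{+}Q = Q^{+}\c, \qquad \c Q^{+}=QQ^{+}=Q\c ,
\]
and conjugating the first identity by $\c^{-1}$ yields $\c^{-1}Q^{+}=Q\c^{-1}$. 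Set $R:=\c^{-1}Q^{+}$. Then $R^{2}=(\c^{-1}Q^{+})(Q\c^{-1})=\c^{-1}(Q^{+}Q)\c^{-1}=\c^{-1}(\c Q)\c^{-1}=Q\c^{-1}=R$, so $R$ is a projection; it is proper as a product of proper operators, with $R^{+}=(Q^{+})^{+}(\c^{-1})^{+}=Q\c^{-1}=R$; and $R(R)=R(Q\c^{-1})=R(Q)=\s$ because $\c^{-1}$ is onto. By the uniqueness of the compatible projection onto $\s$ (the remark following Lemma~\ref{comp caract}), $R=Q_\s$.

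I do not expect a deep obstacle: the one genuine idea is to use the splitting $\c=Q^{+}+(Q-I)$ adapted to the pair $\{\s,\s^{\perp}\cap\e\}$ that governs compatibility of $\s$ — a splitting adapted to $\t$ instead would characterize compatibility of $\t$. The points requiring care are the identification of ranges and null spaces above via Remark~\ref{relaciones usuales proj}, the observation that the relevant sums are automatically direct so that Lemma~\ref{comp caract}(ii) applies verbatim, and the verification that $\c^{-1}$ is again a proper operator so that $R=\c^{-1}Q^{+}$ lies in $\p$.
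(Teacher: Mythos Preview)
Your treatment of the equivalences (i)--(iv) is correct and is exactly the paper's argument: split $\c=Q^{+}+(Q-I)$, apply Lemma~\ref{algebraic lem} with $T_1=Q^{+}$, $T_2=Q-I$, and read off (iii) and (iv) via Remark~\ref{alg lem rem}.

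The derivation of the formula, however, has a genuine gap. You write ``assume one (hence all) of (i)--(iv) holds, so $R(\c)=\t\,\dot{+}\,(\t^{\perp}\cap\e)=\e$'', and from this conclude $\c\in Gl(\e)$. But condition (ii) only says $R(\c)=\t\,\dot{+}\,(\t^{\perp}\cap\e)$; the further equality with $\e$ is the statement that $\t$ itself is compatible, which is \emph{not} among the hypotheses. In fact, Theorem~\ref{suff cond} shows that $\c$ is invertible on $\e$ precisely when \emph{both} $\s$ and $\t$ are compatible, and Example~\ref{two diff companions} exhibits compatible $\s$ with a non-compatible proper companion. So in general $\c\notin Gl(\e)$ under the present assumptions, and your subsequent manipulations (conjugating $\c Q=Q^{+}\c$ by $\c^{-1}$, concluding $R=\c^{-1}Q^{+}\in\p$ with $R^{+}=R$) are not justified as stated.

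The paper handles this as follows. Since $\c$ is injective, $\c^{-1}$ exists as a map $R(\c)\to\e$; condition (iii) gives $R(Q^{+})=\t^{\perp}\cap\e\subseteq R(\c)$, so the composition $\c^{-1}Q^{+}$ is everywhere defined on $\e$. A closed graph argument then shows it is bounded. Finally, rather than proving directly that $\c^{-1}Q^{+}$ is a symmetric idempotent, the paper verifies the equivalent identity $\c\,Q_{\s}=Q^{+}$: from $R(Q)=R(Q_{\s})=\s$ one gets $QQ_{\s}=Q_{\s}$, hence $(Q+Q^{+}-I)Q_{\s}=Q^{+}Q_{\s}$, and $Q^{+}(I-Q_{\s})=0$ because $R(I-Q_{\s})=\s^{\perp}\cap\e=N(Q^{+})$, so $Q^{+}Q_{\s}=Q^{+}$. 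Injectivity of $\c$ then gives $Q_{\s}=\c^{-1}Q^{+}$. Your algebraic identities $\c Q=Q^{+}Q=Q^{+}\c$ are correct and can be reused here, but the argument must proceed without assuming $\c^{-1}\in\b(\e)$.
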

\begin{proof}
 $\text{i)} \Leftrightarrow \text{ii)}$ Set $Q= P_{\s // \t}$. We shall use  Lemma \ref{algebraic lem} with $T_1=Q^+$ and $T_2=Q-I$. Note that
$R(Q^+)=\t^\perp\cap \e$ and $R(Q-I)=\t$ have trivial intersection, and thus the lemma applies.  Then, as it is shown before
$$
N(Q^+)=R(Q)^\perp\cap \e=\s^\perp\cap \e  ; \, \, \, \,  N(Q-I)=R(Q)=\s.
$$
According to Lemma \ref{comp caract}, the fact that $\s$ is compatible is equivalent to $\e=\s + (\s^\perp \cap \e)$.

Clearly, the equivalence between $\text{ii)}$, $\text{iii)}$ and $\text{iv)}$ follows from Remark \ref{alg lem rem}.

Now we assume that $\s$ is compatible. Before the statement of this theorem, we have shown that the operator $\c=Q+Q^+-I$ is injective. Since we know that $R(Q^+)=\t^\perp \cap \e$, by  condition $\text{iii)}$ the operator   $(Q+Q^+-I)^{-1}Q^+$ is everywhere defined in $\e$. Apparently, it has closed graph: let $f_n\to f$ in $\e$ with $(Q+Q^+-I)^{-1}Q^+f_n\to g$. Then $Q^+f_n\to (Q+Q^+-I)g$. Also $Q^+f_n\to Q^+f$. It follows that 
$$
(Q+Q^+-I)g=Q^+f, \ \ \hbox{ i.e. } g=(Q+Q^+-I)^{-1}Q^+f.
$$
Thus, the operator $(Q+Q^+-I)^{-1}Q^+$ is bounded. We claim that $(Q+Q^+-I)^{-1}Q^+=Q_\s$. This is equivalent to proving that
$Q^+=(Q+Q^+-I)Q_\s$. Since $R(Q)=R(Q_\s)=\s$, one has $QQ_\s=Q_\s$, and thus
$$
(Q+Q^+-I)Q_\s=Q^+Q_\s.
$$
Note also that $Q^+(I-Q_\s)=0$: $R(I-Q_\s)=N(Q_\s)=\s^\perp\cap \e=N(Q^+)$. Then 
\[ Q^+Q_\s=Q^+(Q_\s +(I-Q_\s))=Q^+.\qedhere \]
\end{proof}

Of course,  the operator $\c$ may be not invertible on $\e$, and $\s$ can be a compatible subspace (see Example \ref{prop no comp}). In fact, $\c$ is invertible on $\e$ exactly when  $\s$ and $\t$ are both compatible subspaces.

\begin{teo}\label{suff cond}
Let $\s$ be a proper subspace of $\e$ and $\t$ a proper companion of $\s$. The following conditions 
are equivalent:
\begin{itemize}
\item[i)] $\s$ and $\t$ are  compatible subspaces. 
\item[ii)] $(P_{\ov{\s}} +  P_{\ov{\t}})(\e)\subseteq \e$.
\item[iii)] $(P_{\ov{\s}} -  P_{\ov{\t}})(\e)\subseteq \e$.
\item[iv)] $\c$ is invertible on $\e$.
\end{itemize}
\end{teo}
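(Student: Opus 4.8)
The plan is to establish the cyclic chain of implications $\text{i)} \Rightarrow \text{ii)} \Rightarrow \text{iii)} \Rightarrow \text{iv)} \Rightarrow \text{i)}$, exploiting the symmetry between $\s$ and $\t$ throughout (note that if $\t$ is a proper companion of $\s$, then $\s$ is a proper companion of $\t$, and also $\s^\perp\cap\e$ is a proper companion of $\t^\perp\cap\e$ by Corollary \ref{ort proper}). For $\text{i)} \Rightarrow \text{ii)}$: if both $\s$ and $\t$ are compatible, then by Lemma \ref{comp caract} \text{v)} we have $P_{\ov{\s}}(\e)\subseteq\e$ and $P_{\ov{\t}}(\e)\subseteq\e$, and the sum of the two is then automatically in $\e$. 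The implication $\text{ii)} \Leftrightarrow \text{iii)}$ should come from the identity $P_{\ov{\s}} + P_{\ov{\t}} = I + (P_{\ov{\s}} - P_{\ov{\t}})\cdot(\text{something})$ — more precisely, from Ando's Theorem \ref{formula proj} applied to the pair $\ov\s, \ov\t$ (which satisfy $\ov\s \dot+ \ov\t = \l$ by Corollary \ref{proper caract}): one has $(P_{\ov\s}-P_{\ov\t})^{-1} = \cl = P_{\ov\s}+P_{\ov\t}-I$, so $P_{\ov\s}+P_{\ov\t} = I + (P_{\ov\s}-P_{\ov\t})^{-1}$ and $(P_{\ov\s}-P_{\ov\t}) = (P_{\ov\s}+P_{\ov\t}-I)^{-1}$; since $\cl$ and its inverse both restrict to bounded operators related to $\c$, and since $\e$ is invariant under $P_{\ov\s}+P_{\ov\t}$ iff it is invariant under the resolvent expression, the two invariance conditions transfer to each other. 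I would need to be a little careful here: the cleanest route is probably to show each of ii), iii) is equivalent to iv) directly rather than to each other.

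For the equivalence with $\text{iv)}$: the key observation, already recorded in the paragraph preceding Theorem \ref{charac prop compatible s}, is that $\cl = P_{\s//\t} + P_{\s//\t}^+ - I$ extended to $\l$ equals $(P_{\ov\s}-P_{\ov\t})^{-1}$ by Ando's theorem (using $\bar P_{\s//\t} = P_{\ov\s//\ov\t}$ from Corollary \ref{proper caract}), hence $\cl$ is invertible on $\l$ with $\cl^{-1} = P_{\ov\s}-P_{\ov\t}$. So $\c = \cl|_\e$ is injective on $\e$, and $\c$ is invertible \emph{on $\e$} precisely when $\cl^{-1}(\e) = (P_{\ov\s}-P_{\ov\t})(\e) \subseteq \e$, which is condition iii); this gives $\text{iii)} \Leftrightarrow \text{iv)}$ rather cleanly. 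Similarly $P_{\ov\s}+P_{\ov\t} = I + \cl^{-1}$, so $(P_{\ov\s}+P_{\ov\t})(\e)\subseteq\e$ iff $\cl^{-1}(\e)\subseteq\e$, giving $\text{ii)} \Leftrightarrow \text{iv)}$. Finally $\text{iv)} \Rightarrow \text{i)}$: if $\c$ is invertible on $\e$, then in particular $R(\c) = \e \supseteq \t$, so $\s$ is compatible by Theorem \ref{charac prop compatible s} \text{iv)}; and by the $\s \leftrightarrow \t$ symmetry (the operator $C_{\t,\s} = P_{\t//\s} + P_{\t//\s}^+ - I = -\c$ since $P_{\t//\s} = I - P_{\s//\t}$ and similarly for the adjoints, so $C_{\t,\s}$ is invertible on $\e$ iff $\c$ is), applying Theorem \ref{charac prop compatible s} to the pair $(\t,\s)$ shows $\t$ is compatible too.

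The main obstacle I anticipate is the careful bookkeeping around restrictions and extensions: one must verify that $\cl$ genuinely is the continuous $\l$-extension of $\c$ (this uses that $\c$ is a proper — indeed symmetrizable — operator, so Theorem \ref{proper op} i) guarantees the extension exists), that $\cl^{-1}$ when restricted to $\e$ is the same as the algebraic inverse of $\c$ on $\e$ when the latter exists, and that $\c$ invertible \emph{in the Banach algebra $\p$} versus invertible \emph{on $\e$} do not need to be distinguished here because $\c$ is symmetrizable (so $\c = \c^+$ and invertibility on $\e$ already forces invertibility in $\p$). I would also double-check the identity $C_{\t,\s} = -\c$: from $P_{\t//\s} = I - Q$ with $Q = P_{\s//\t}$, one gets $P_{\t//\s}^+ = I - Q^+$, hence $C_{\t,\s} = (I-Q) + (I-Q^+) - I = I - Q - Q^+ = -(Q + Q^+ - I) = -\c$, which is clean. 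With these identities in hand the proof is short; the whole content is Ando's formula plus the characterization in Theorem \ref{charac prop compatible s} plus the $\s \leftrightarrow \t$ symmetry.
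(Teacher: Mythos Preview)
Your overall architecture is sound and essentially matches the paper's: Ando's formula (Theorem \ref{formula proj}) applied to the pair $\ov{\s},\ov{\t}$ is indeed the engine, and your arguments for $\text{i)}\Rightarrow\text{ii)}$, $\text{iii)}\Leftrightarrow\text{iv)}$, and $\text{iv)}\Rightarrow\text{i)}$ are correct. Your route $\text{iv)}\Rightarrow\text{i)}$ via Theorem \ref{charac prop compatible s} together with the clean identity $C_{\t,\s}=-\c$ is a nice alternative to the paper's (the paper instead goes $\text{iii)}\Rightarrow\text{i)}$ by adding and subtracting $P_{\ov\s}\pm P_{\ov\t}$ and invoking Lemma \ref{comp caract} v)).

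There is, however, one genuine error in your treatment of $\text{ii)}\Leftrightarrow\text{iv)}$. You write $(P_{\ov\s}-P_{\ov\t})^{-1}=\cl=P_{\ov\s}+P_{\ov\t}-I$ and then deduce $P_{\ov\s}+P_{\ov\t}=I+\cl^{-1}$. The first equality is correct, but the second is not: $\cl$ is the extension of $P_{\s//\t}+P_{\s//\t}^+-I$, i.e.\ $\cl=P_{\ov\s//\ov\t}+(P_{\ov\s//\ov\t})^*-I$, which involves the \emph{oblique} projection $P_{\ov\s//\ov\t}$, not the orthogonal projection $P_{\ov\s}$. Consequently $I+\cl^{-1}=I+P_{\ov\s}-P_{\ov\t}$, which is not $P_{\ov\s}+P_{\ov\t}$, and your direct bridge $\text{ii)}\Leftrightarrow\text{iv)}$ collapses.

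The fix, which is what the paper does for $\text{ii)}\Leftrightarrow\text{iii)}$, is to use a different identity linking the sum and the difference. From Ando's second formula one gets $P_{\ov\s//\ov\t}(P_{\ov\s}-P_{\ov\t})=P_{\ov\s}$ and $P_{\ov\t//\ov\s}(P_{\ov\t}-P_{\ov\s})=P_{\ov\t}$, whence
\[
P_{\ov\s}-P_{\ov\t}=(2P_{\ov\s//\ov\t}-I)(P_{\ov\s}+P_{\ov\t}).
\]
The symmetry $2P_{\ov\s//\ov\t}-I$ restricts to the $\e$-invertible operator $2P_{\s//\t}-I$ (Corollary \ref{proper caract} i)), so $(P_{\ov\s}-P_{\ov\t})(\e)\subseteq\e$ if and only if $(P_{\ov\s}+P_{\ov\t})(\e)\subseteq\e$. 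With this repair your chain closes exactly as in the paper.
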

\begin{proof}
$\text{i)} \Rightarrow \text{ii)}$ This implication follows from Lemma \ref{comp caract} $\text{v)}$.

\smallskip

\noi $\text{ii)} \Leftrightarrow \text{iii)}$ Since $\s$ is a proper subspace,  we know that $\ov{\s} \, \dot{+} \, \ov{\t}=\l$ by Corollary \ref{proper caract} $i)$. Then, the following formula (see Theorem \ref{formula proj}) 
for the projection on a Hilbert space with range $\ov{\s}$ and nullspace $\ov{\t}$ can be used:
\[   P_{\ov{\s} // \ov{\t}}=P_{\ov{\s}} \, (P_{\ov{\s}} -  P_{\ov{\t}})^{-1}. \]
Equivalently, $P_{\ov{\s} // \ov{\t}} \,(P_{\ov{\s}} -  P_{\ov{\t}}) = P_{\ov{\s}}$. Interchanging the roles of the subspaces, we can also get
that $P_{\ov{\t} // \ov{\s}}\, (P_{\ov{\t}} -  P_{\ov{\s}}) = P_{\ov{\t}}$. Then, we obtain
\begin{equation}\label{symm}
 P_{\ov{\s}} - P_{\ov{\t}} =      (2P_{\ov{\s} // \ov{\t}}- I)  \, (P_{\ov{\s}} + P_{\ov{\t}}).  
\end{equation}
Since $\s$ is a proper subspace, we have $\bar{P}_{\s // \t}=P_{\ov{\s} // \ov{\t}}$ by Corollary \ref{proper caract} $i)$. Then, the symmetry $2P_{\ov{\s} // \ov{\t}}- I$ acting on $\l$  is an extension of the symmetry $2P_{\s // \t}-I$, which is an invertible operator on $\e$. From Eq. (\ref{symm}), it is now clear that  $(P_{\ov{\s}} - P_{\ov{\t}})(\e)\subseteq \e$ if and only if $(P_{\ov{\s}} + P_{\ov{\t}})(\e)\subseteq \e$.

\smallskip

\noi $\text{iii)} \Rightarrow \text{i)}$: We have shown that  $(P_{\ov{\s}} - P_{\ov{\t}})(\e)\subseteq \e$ is equivalent to $(P_{\ov{\s}} + P_{\ov{\t}})(\e)\subseteq \e$. If we add or subtract $P_{\ov{\s}} - P_{\ov{\t}}$ and $P_{\ov{\s}} + P_{\ov{\t}}$, we prove this implication. 

\smallskip

\noi $\text{iii)} \Leftrightarrow \text{iv)}$ Set $Q:=P_{\s//\t}$. By Corollary \ref{proper caract} $i)$, we have $\ov{\s} \, \dot{+} \, \ov{\t}=\l$. Therefore $P_{\ov{\s}} - P_{\ov{\t}}$ is invertible on $\l$, and its inverse is given by 
$$
(P_{\ov{\s}} - P_{\ov{\t}})^{-1}=\bar{Q} + \bar{Q}^* - I.
$$
These facts can be found again in Theorem \ref{formula proj}. If the operator $Q + Q^+ -I$ is invertible on $\e$, then its extension $\bar{Q} + \bar{Q}^* - I$ to 
$\l$ maps $\e$ onto $\e$. Therefore we get $(P_{\ov{\s}}-P_{\ov{\t}})(\e)=(P_{\ov{\s}}-P_{\ov{\t}})(Q + Q^+ -I)(\e)=\e$. 
To prove the converse, we assume that $(P_{\ov{\s}}-P_{\ov{\t}})(\e)\subseteq \e$. Note  that 
$(P_{\ov{\s}}-P_{\ov{\t}})^{-1}(\e)=(\bar{Q} + \bar{Q}^* - I)(\e)=(Q + Q^+ -I)(\e)\subseteq \e$, which implies that  $\e \subseteq (P_{\ov{\s}}-P_{\ov{\t}})(\e)$. Therefore $(P_{\ov{\s}}-P_{\ov{\t}})(\e)=\e$. Now we have
$(Q+Q^+-I)(\e)=(\bar{Q} + \bar{Q}^* - I)(\e)=(\bar{Q} + \bar{Q}^* - I)(P_{\ov{\s}}-P_{\ov{\t}})(\e)=\e$.
\end{proof}

Let $\s$ be a compatible subspace of $\e$. By Corollary \ref{relacion supl propers} any proper companion of $\s$ arises as the image of other proper companion by an invertible operator in the Banach algebra $\p$ given by the proper operators. In general, this invertible operator does not extend to a unitary operator on $\l$. Thus, if $\t$ and
$\t_1$ are two proper companions of $\s$, and $\t$ is a compatible subspace, the subspace $\t_1$ may be not compatible. For a concrete example of this situation see Example \ref{two diff companions}. However, we shall give below two sufficient conditions to ensure the compatibility of $\t_1$.  We first have to introduce the following metric in the set of all proper companions of $\s$:
$$  d(\t_1,\t_2)=\| P_{\t_1//\s} - P_{\t_2//\s} \|_\p \, , $$
where $\t_i$, $i=1,2$, are proper companions of $\s$ and $\| \, \|_\p$ is the norm of the algebra $\p$. 


\begin{coro}\label{two companions}
Let $\s$ be a proper subspace of $\e$ and $\t$ a proper companion of $\s$. Suppose that 
$\s$ and $\t$ are compatible subspaces. The following assertions hold:
\begin{itemize}
\item[i)] There exists a constant $r>0$, depending only on $\s$ and $\t$, such that $\t_1$ is a compatible subspace whenever $d(\t,\t_1)<r$.
\item[ii)] Let $G \in \p^{\times}$ such that $G-I$ and $G^+-I$ are compact operators on $\e$, then $G(\s)$ and $G(\t)$ are compatible subspaces.  
\end{itemize}
\end{coro}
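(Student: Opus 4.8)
The plan is to deduce both parts from Theorem~\ref{suff cond}: a proper subspace and a fixed proper companion are both compatible exactly when the operator $\c$ is invertible on $\e$. By hypothesis $\s$ and $\t$ are compatible, so $\c$ is invertible on $\e$. In each part I will exhibit the relevant proper companion and show that the corresponding operator of the form $P_{\,\cdot\,//\,\cdot\,}+P_{\,\cdot\,//\,\cdot\,}^+-I$ is a perturbation of $\c$ — small in norm in part i), compact in part ii) — and conclude its invertibility.

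For i), set $Q=P_{\s//\t}$ and $Q_1=P_{\s//\t_1}$, which are proper projections since $\t$ and $\t_1$ are proper companions of $\s$. Because $Q=I-P_{\t//\s}$ and $Q_1=I-P_{\t_1//\s}$, we have $Q-Q_1=P_{\t_1//\s}-P_{\t//\s}$, a proper operator with $(Q-Q_1)^+=Q^+-Q_1^+$, so $\|Q-Q_1\|_\p=d(\t,\t_1)$. Then
\[
\|\c-C_{\s,\t_1}\|_{\b(\e)}=\|(Q-Q_1)+(Q^+-Q_1^+)\|_{\b(\e)}\le\|Q-Q_1\|_{\b(\e)}+\|Q^+-Q_1^+\|_{\b(\e)}=d(\t,\t_1).
\]
Taking $r=1/\|\c^{-1}\|_{\b(\e)}$, which depends only on $\s$ and $\t$, the usual Neumann series argument shows that $C_{\s,\t_1}$ is invertible on $\e$ whenever $d(\t,\t_1)<r$; Theorem~\ref{suff cond} then gives that $\s$ and $\t_1$, hence in particular $\t_1$, are compatible.

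For ii), by Corollary~\ref{relacion supl propers} the subspace $G(\t)$ is a proper companion of the proper subspace $G(\s)$, and as in the proof of that corollary $P_{G(\s)//G(\t)}=GP_{\s//\t}G^{-1}$; applying the involution of $\p$ gives $P_{G(\s)//G(\t)}^+=(G^+)^{-1}P_{\s//\t}^+G^+$. The key step is to observe that $C_{G(\s),G(\t)}-\c$ is compact on $\e$: since $G$ and $G^+$ are invertible on $\e$ and $G-I$, $G^+-I$ are compact, the operators $G^{-1}-I=-G^{-1}(G-I)$ and $(G^+)^{-1}-I=-(G^+)^{-1}(G^+-I)$ are compact as well; writing each factor as $I$ plus a compact operator and expanding $GP_{\s//\t}G^{-1}$ and $(G^+)^{-1}P_{\s//\t}^+G^+$, all the cross terms lie in the ideal of compact operators, so $P_{G(\s)//G(\t)}-P_{\s//\t}$ and $P_{G(\s)//G(\t)}^+-P_{\s//\t}^+$ are compact, and therefore so is $C_{G(\s),G(\t)}-\c$. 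As $\c$ is invertible on $\e$, $C_{G(\s),G(\t)}$ is a compact perturbation of an invertible operator, hence Fredholm of index $0$ on $\e$. On the other hand $C_{G(\s),G(\t)}$ is injective on $\e$: this was recorded before Theorem~\ref{charac prop compatible s} for an arbitrary proper subspace together with a proper companion, and since $2P_{G(\s)//G(\t)}-I=G(2P_{\s//\t}-I)G^{-1}$ is a symmetry, the same positivity argument applies to the pair $G(\s),G(\t)$. An injective Fredholm operator of index $0$ is invertible, so $C_{G(\s),G(\t)}$ is invertible on $\e$, and Theorem~\ref{suff cond} yields that $G(\s)$ and $G(\t)$ are compatible.

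I expect the main obstacle to be this last point of ii): the compact perturbation argument only delivers Fredholmness of index $0$, so the crucial move is to combine it with the automatic injectivity of operators of the form $P_{\,\cdot\,//\,\cdot\,}+P_{\,\cdot\,//\,\cdot\,}^+-I$ (from the positivity argument preceding Theorem~\ref{charac prop compatible s}) in order to upgrade ``index $0$'' to ``invertible''. Part i) is essentially routine once one notices that the metric $d$ measures precisely the $\|\cdot\|_\p$-distance between the corresponding oblique projections, which in turn bounds the $\b(\e)$-distance between the associated $C$-operators.
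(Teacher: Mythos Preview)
Your argument is correct in both parts, and in fact cleaner than the paper's.

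For part i), the paper proves continuity of $\t_1\mapsto C_{\s,\t_1}$ at $\t$ by going through the invertible proper operators $G_n$ of Corollary~\ref{relacion supl propers}: one writes $P_{\s//\t_n}=G_nP_{\s//\t}G_n^{-1}$, checks from the explicit formula for $G_n$ that $\|G_n-I\|\to0$ and $\|G_n^+-I\|\to0$, and deduces $\|C_{\s,\t_n}-\c\|\to0$. Your route bypasses this conjugation entirely by the identity $P_{\s//\t}-P_{\s//\t_1}=P_{\t_1//\s}-P_{\t//\s}$, which immediately gives the sharp estimate $\|C_{\s,\t_1}-\c\|_{\b(\e)}\le d(\t,\t_1)$ and even an explicit value $r=\|\c^{-1}\|_{\b(\e)}^{-1}$.

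For part ii), both proofs begin by writing $C_{G(\s),G(\t)}=\c+K$ with $K$ compact. The paper then appeals to Nieto's description of the essential spectrum of symmetrizable operators together with the Krein--Lax fact that a symmetrizable operator and its extension share the same point spectrum, in order to exclude $0$ from $\sigma_\e(C_{G(\s),G(\t)})$. Your argument replaces this spectral machinery by elementary Fredholm theory: a compact perturbation of an invertible operator has index $0$, and the general injectivity of $C$-operators (already established before Theorem~\ref{charac prop compatible s}, since $\bar{C}_{G(\s),G(\t)}$ is invertible on $\l$) then forces invertibility on $\e$. This is both shorter and avoids the external references \cite{nieto}, \cite{krein b}, \cite{lax}. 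The remark that $2P_{G(\s)//G(\t)}-I=G(2P_{\s//\t}-I)G^{-1}$ is a symmetry is true but unnecessary: the injectivity argument preceding Theorem~\ref{charac prop compatible s} applies verbatim to any proper subspace and proper companion, and $G(\s),G(\t)$ form such a pair by Corollary~\ref{relacion supl propers}.
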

\begin{proof}
\noi $\text{i)}$  In order to prove our assertion, it is enough to show that the map
\begin{equation}\label{cont subs}
  \{ \, \t_1 \subseteq \e \, : \, \t_1 \text{ is a proper companion of } \s \, \} \to \b(\e), \, \, \, \, \t_1 \mapsto C_{\s,\t_1} 
\end{equation}	
is continuous at $\t$, when the first space is endowed with the metric $d$ defined above  and $\b(\e)$ is considered with its usual operator norm $\| \, \|$. In fact, note that if this map is continuous, then there is constant $r>0$ depending on $P_{\t // \s}$, such that
$$ \| C_{\s, \t_1} - \c \| \leq 1/ \| \c^{-1} \|,  $$
whenever $d(\t, \t_1)<r$. This latter inequality above implies that $C_{\s, \t_1}$ is invertible on $\e$, and by Theorem \ref{suff cond}, this is equivalent to the compatibility of $\s$ and $\t_1$.

Let  $(\t_n)$ be  proper companions of the compatible subspace $\s$ such that $d(\t_n,\t)\to 0$.
This means that 
\begin{equation}\label{converg} 
\| P_{\t_n//\s} - P_{\t // \s} \| \to 0  \, \, \text{ and } \, \, \| P_{\s^\perp \cap \e//\t_n ^\perp \cap \e} - P_{\s^\perp \cap \e//\t^\perp \cap \e} \| \to 0.   
\end{equation}
On the other hand, we recall from Corollary \ref{relacion supl propers} that there exist  operators $G_n \in \p^{\times}$ such that $G_n(\t)=\t_n$ and $G(\s)=\s$. In particular, it follows that $G_nP_{\s//\t}G_n^{-1}=P_{\s//\t_n}$. As we have shown in the proof of this corollary, $G_n$ and $G_n^+$ are given by 
$$ G_n=P_{\s // \t} + P_{\t_n // \s} \, P_{\t// \s} $$
and
$$ G^+_n= P_{ \t^{\perp}\cap \e // \s^{\perp}\cap \e}+ P_{\s^{\perp}\cap \e// \t ^{\perp} \cap \e} \, P_{\s^{\perp}\cap \e// \t_n ^{\perp} \cap \e}.$$  
Using Eq. (\ref{converg}), we see that $\|G_n-I\| \to 0$ and $\|G_n^+ -I \| \to 0$. Therefore, we obtain that 
$$ \| C_{\s, \t_n} - \c \|= \| G_n P_{\s // \t}G_n^{-1} + (G_n^+)^{-1} P_{\s // \t}^+G_n^+ - P_{\s // \t} - P_{\s // \t}^+ \| \to 0. $$
This completes the proof of the continuity of the map defined in (\ref{cont subs}). 

\smallskip

\noi \text{ii)}  We set $\t_1=G(\t)$, $\s_1=G(\s)$ and $Q=P_{\s//\t}$. Then  we note that 
\begin{align*}
C_{\s_1 , \t_1} & = GQG^{-1} + (G^+)^{-1}Q^+G^+ -I \\ 
& = (G-I)QG^{-1} + Q(G^{-1}-I) + ((G^+)^{-1}-I)Q^+G^+ + (G^+)^{-1}Q^+(G^+-I)+\c \\
& = K + \c \,,
\end{align*}
for some compact operator $K$ on $\e$. This implies that the essential spectrum $\sigma_{ess,\e}(C_{\s_1 , \t_1})$ of $C_{\s_1 , \t_1}$ coincides with that of $\c$. Thus, we get that $0 \notin \sigma_{ess,\e}(C_{\s_1 , \t_1})$.

Now we recall that the essential spectrum of a symmetrizable operator consists of those numbers in the spectrum over $\e$ which are not isolated eigenvalues of finite multiplicity (see \cite[Thm. 1]{nieto}). Applying this result to the operator $C_{\s_1 , \t_1}$, we have that its spectrum can be written as following  disjoint union:
$$  \sigma_\e(C_{\s_1 , \t_1})=\sigma_{ess,\e}(C_{\s_1 , \t_1}) \cup \sigma_{p,\e}(C_{\s_1 , \t_1}), $$
where $\sigma_{p,\e}(C_{\s_1 , \t_1})$ is the point spectrum of $C_{\s_1,\t_1}$ on $\e$ consisting on isolated eigenvalues of finite multiplicity. Since $\bar{C}_{\s_1,\t_1}$ is always invertible on $\l$, we have that zero does not belong to its point spectrum on $\l$. 
But  the point spectrum  of a symmetrizable operator coincides with the point spectrum of its extension (see \cite{krein b, lax}). Therefore, $0 \notin \sigma_{p,\e}(C_{\s_1 , \t_1})$. Hence $C_{\s_1 , \t_1}$ is invertible on $\e$, and thus, $\s_1$ and $\t_1$ are compatible subspaces.   
\end{proof}

As it is shown in Example \ref{prop no comp}, there are proper subspaces which are not compatible subspaces. 
 These kind of subspaces must have 
infinite dimension and infinite codimension. 
Now we shall see that each proper projection onto a proper subspace which is not compatible gives rise a symmetrizable operator with non real  spectrum as an operator in $\e$. Up to best of our knowledge, the first example of such kind of symmetrizable operator was constructed in \cite{dieudonne}. Other examples were given in \cite{gz, barnes00, achl}. All of them rely on a fundamental result by Krein on the spectrum of Toeplitz matrices (see \cite[Thm. 13.2]{krein}).

\begin{coro}\label{spec prop sym and prop}
Let $\s$ be a proper subspace of $\e$ which is not a compatible subspace. Let $Q$ be a proper projection
with range $\s$. Then $X=VV^+$, where $V=2Q-I$,  is a symmetrizable operator with non real points in the spectrum $\sigma_\e(X)$.
\end{coro}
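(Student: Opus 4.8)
The plan is to check three things, in this order: $X$ is symmetrizable and its $\l$-extension is positive and invertible; $-1\in\sigma_\e(X)$; and, finally, that $\sigma_\e(X)$ cannot be real if it contains $-1$. Set $V=2Q-I$. Since $Q$ is a proper projection, $V$ is proper with $V^+=2Q^+-I$, and $V^2=(V^+)^2=I$, so $V$ and $V^+$ are invertible on $\e$; hence $X=VV^+$ is invertible on $\e$ (with inverse $V^+V$), and, since $(AB)^+=B^+A^+$, one has $X^+=(VV^+)^+=(V^+)^+V^+=VV^+=X$, i.e.\ $X$ is symmetrizable. Passing to extensions, $\bar X=\bar V\bar V^{*}$, where $\bar V=2\bar Q-I=2P_{\ov{\s}//\ov{\t}}-I$ (Corollary \ref{proper caract} i), with $\t:=N(Q)$) is an invertible symmetry on $\l$; thus $\langle\bar Xf,f\rangle=\|\bar V^{*}f\|^{2}\ge m\|f\|^{2}$ for some $m>0$, and so $\sigma_\l(\bar X)\subseteq[m,+\infty)$.

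Next I would locate $-1$ in $\sigma_\e(X)$. Writing $Q=\tfrac12(I+V)$ and $Q^+=\tfrac12(I+V^+)$ gives $\c=Q+Q^+-I=\tfrac12(V+V^+)$, whence $2V\c=V^{2}+VV^{+}=I+X$. Since $\t=N(Q)$ is a proper companion of $\s$ and $\s$ is not compatible, Theorem \ref{charac prop compatible s} yields $\t\not\subseteq R(\c)$, so $\c$ is not onto $\e$; as $V$ is invertible on $\e$, $R(X+I)=V(R(\c))\neq\e$, and therefore $-1\in\sigma_\e(X)$.

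Now I would argue by contradiction: suppose $\sigma_\e(X)\subseteq\RR$. As $X$ is invertible on $\e$ we have $0\notin\sigma_\e(X)$, so $\Lambda:=\sigma_\e(X)\cap(-\infty,0)$ is a clopen, hence compact, piece of $\sigma_\e(X)$, and $\Lambda\neq\emptyset$ by the previous step. Moreover, by Remark \ref{notions of spec}, $\sigma_\p(X)=\sigma_\e(X)\cup\overline{\sigma_\e(X)}=\sigma_\e(X)$, so $\Lambda$ is also clopen in $\sigma_\p(X)$; choose a contour $\Gamma$, symmetric about $\RR$, separating $\Lambda$ from $\sigma_\p(X)\setminus\Lambda$. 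Repeating the contour-integral computation in the second example of Section~\ref{section proper s} with $\Gamma$ in place of a small circle --- which is legitimate since $\Gamma\cap\sigma_\p(X)=\emptyset$ and $\Gamma=\overline{\Gamma}$ --- the Riesz idempotent $E=\tfrac{1}{2\pi i}\int_{\Gamma}(z-X)^{-1}\,dz$ is a proper projection, and, since $X^+=X$ and $\Gamma$ is $\RR$-symmetric, it satisfies $E^+=E$. By the Riesz decomposition $E\neq0$, $XE=EX$, and $\sigma_{R(E)}(X|_{R(E)})=\Lambda$.

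To finish, note that, because $E$ is a proper projection with $E^+=E$, the subspace $R(E)$ is proper, $X|_{R(E)}$ is a symmetrizable operator on $R(E)$ --- which sits densely and continuously in the Hilbert space $\ov{R(E)}=R(\bar E)$ --- and its extension is $\bar X|_{\ov{R(E)}}$. Then Theorem \ref{proper op} ii) applied to $X|_{R(E)}$ gives $\sigma_{\ov{R(E)}}(\bar X|_{\ov{R(E)}})\subseteq\Lambda\cup\overline{\Lambda}=\Lambda\subseteq(-\infty,0)$; but $\ov{R(E)}$ reduces the positive invertible operator $\bar X$, so $\bar X|_{\ov{R(E)}}\ge mI$ and its spectrum lies in $[m,+\infty)$. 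Since $\ov{R(E)}\neq\{0\}$, this spectrum is nonempty, contradicting the two inclusions; hence $\sigma_\e(X)\not\subseteq\RR$. The crux is exactly this last contradiction: the plausible shortcut ``$X$ symmetrizable with $\bar X>0$ forces $\sigma_\e(X)\subseteq[0,+\infty)$'' is not transparent, because $\sigma_\e(X)$ may be strictly larger than $\sigma_\l(\bar X)$. It is the realness assumption that lets one split off the negative part $\Lambda$ by a Riesz projection which is, moreover, self-adjoint (only then is $X|_{R(E)}$ symmetrizable with $\bar X|_{\ov{R(E)}}$ positive), and only then does the $\e$-versus-$\l$ spectrum comparison in Theorem \ref{proper op} ii) clash with the positivity of $\bar X$. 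One must also take care that $\Gamma$ can be chosen $\RR$-symmetric, so that the example's computation delivers $E^+=E$ and not merely $E\in\p$.
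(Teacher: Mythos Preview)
Your argument is correct, but it follows a genuinely different route from the paper's. The paper, after obtaining $-1\in\sigma_\e(X)$ (via Theorem~\ref{suff cond} rather than Theorem~\ref{charac prop compatible s}, though either works), invokes an external result of Barnes \cite{barnes90}: for the unital homomorphism $\p\to\b(\l)$, $X\mapsto\bar X$, every connected component of $\sigma_\p(X)$ must meet $\sigma_\l(\bar X)$. Since $\sigma_\l(\bar X)\subseteq(0,\infty)$ and $0\notin\sigma_\p(X)$, the component of $\sigma_\p(X)$ containing $-1$ must reach the positive axis and therefore contains non-real points; as $\sigma_\p(X)=\sigma_\e(X)\cup\overline{\sigma_\e(X)}$, so does $\sigma_\e(X)$.

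Your approach dispenses with Barnes's theorem and stays entirely within the paper's own machinery: the Riesz calculus in $\p$ (as in the second example of Section~\ref{section proper s}), compatibility, and the spectral inclusion of Theorem~\ref{proper op}~ii). The cost is that you must verify that the Riesz idempotent $E$ for the negative spectral piece satisfies $E^+=E$ (whence $R(E)$ is compatible and furnishes a new pair $(\e',\l')=(R(E),\overline{R(E)})$), and that $X|_{R(E)}$ is again symmetrizable in that setting with extension $\bar X|_{\overline{R(E)}}$. What you gain is self-containment; what the paper gains is brevity. Your argument is in effect a hands-on proof, in this special situation, of the connected-component phenomenon that Barnes's result supplies wholesale.
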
 
\begin{proof}
If the proper subspace $\s$ is not compatible, and $Q$ is a proper projection  with range $\s$, then by Theorem \ref{suff cond} the operator $Q + Q^+ - I$ is not invertible in $\e$. Equivalently, we have that $V + V^+$ is not invertible, where $V=2Q-I$. Since $V^2=I$, we get that $-1 \in \sigma_\e(VV^+)$. 
Now consider the continuous unital monomorphism given by
\[  \vp: \p \to \b(\l), \, \, \, \vp(X)=\bar{X}. \]
Since $\vp$ is a unital morphism, it follows that $\sigma_\l(\bar{X})\subseteq \sigma_\p(X)$ (see also Theorem \ref{proper op}). Moreover, each connected component $\Delta$ of $\sigma_\p(X)$ satisfies $\Delta \cap \sigma_\l (\bar{X}) \neq \emptyset$ (see \cite[Thm. 4.5]{barnes90}). 
If we apply this result to $X=VV^+$, and we take into account that $\sigma_\l (\bar{X})\subseteq (0,\infty)$ and $0 \notin \sigma_\p(X)$, then we find that there is some  $z \in \Delta$ with non trivial imaginary part. Thus, we get 
that $\sigma_\p(X)$ has non real points. Hence $\sigma_\e(X)$ also has non real points.  
\end{proof}

\section{Examples}\label{section examples}


\subsection{Trace class and Hilbert-Schmidt operators.}
In the  examples of this subsection, we take  $\e=(\b_1(\h), \| \, \cdot \, \|_1)$ and $\l=(\b_2(\h), \| \, \cdot \, \|_2)$  the spaces of trace class operators and Hilbert-Schmidt operators on a Hilbert space $\h$, respectively. Recall that $\b_2(\h)$ is a Hilbert space with inner product given by $\PI{x}{y}=Tr(xy^*)$, where $Tr$ denotes the usual trace and $x,y \in \b_2(\h)$. 

\begin{ejem}
 A projection $q$ acting  on $\h$ gives rise to a projection on $\b_1(\h)$ by left multiplication, i.e.
\[   L_q: \b_1(\h) \to \b_1(\h), \, \, \, \, L_q(x)=qx. \] 
We note that 
$ \PI{L_q(x)}{y}=\PI{x}{L_{q^*}(y)}  $
for all $x,y \in \b_1(\h)$. Thus, $L_q$ is proper projection, and $L_q ^+=L_{q^*}$. Then, the range of $L_q$, that is
\[  \s=\{  \, qx \, : \,  x \in \b_1(\h)  \, \}, \]
is a proper subspace. Now we prove that $\s$ is a compatible subspace. Let $\sigma(L_x)$ denote that spectrum of $L_x$ in $\b(\b_1(\h))$ and $\sigma(x)$ denote  the spectrum of $x$ in $\b(\h)$. 
If $\lambda \notin \sigma(x)$, then there exists $y \in \b(\h)$ such that $(x-\lambda)y=y(x-\lambda)=1$. This implies 
$(L_x-\lambda)L_y=L_y(L_x-\lambda)=I$, so that $\sigma(L_x)\subseteq \sigma(x)$. Using this fact with $x=q - q^*$, we see that 
$\sigma(L_q -L_{q^*})\subseteq \sigma(q-q^*)\subseteq i \, \RR$. Also note that 
$(L_q + L_q ^+ - I )^2=I - (L_q - L_q ^+)^2$, then $\sigma((L_q + L_q ^+ - I )^2)=\sigma(I - (L_q -L_q^*)^2) \subseteq [1, \infty]$. 
We conclude that $L_q + L_q ^+ - I$ is invertible on $\b_1(\h)$, and by Theorem \ref{suff cond}, it follows that    $\s$ is a compatible subspace.  
\end{ejem}

\begin{ejem}\label{prop no comp}
Let $q$ be a projection in $\b(\h)$. 
Denote by $C_q$ the following  operator
\[   C_q: \b_1(\h) \to \b_1(\h), \, \, \, \, C_q(x)=qxq.\]
Clearly, $C_q$ is a continuous projection. It is easily seen that $C_q^+=C_{q^*}$, so we have that $C_q$ is a proper projection and 
its range $$\s=\{ \, qxq \, : \, x \in \b_1(\h)\, \}$$ 
is a proper subspace. We shall see below that the compatibility of this subspace depends on our election of the projection $q$. In particular, we shall prove that $\s$ is not compatible for infinitely many different choices of $q$. 

Assume that $R(q)=\k$ is an infinite dimensional subspace of $\h$ satisfying $\k \oplus \k =\h$ (orthogonal sum). We write $q$ as a matrix with respect to this  decomposition of $\h$ as
 $$q=\begin{pmatrix} 1 & z \\ 0 & 0  \end{pmatrix}.$$
Now we prove the following:  
\begin{itemize} \item[i)] If $z \in \b(\k)$, $\|z\|< 1$, then $\s$ is a compatible subspace.
\item[ii)] If $z \in Gl(\k)$, $z$ normal, then $\s$ is a compatible subspace if and only if $\ov{\lambda} \mu \neq - 1$ for all $\lambda, \mu \in \sigma(z)$. In particular, $\s$ is non compatible if $z$ is a self-adjoint symmetry. 
\end{itemize}

\smallskip

\noi $\text{i)}$ We consider the matrix representation of two arbitrary operators $x,y \in \b(\h)$ 
with respect to the  decomposition $\k \oplus \k =\h$:
\[ x=\begin{pmatrix}  x_{11} & x_{12} \\  x_{21} & x_{22} \end{pmatrix}, \, \, \, \, \, \, \, \,
y=\begin{pmatrix}  y_{11} & y_{12} \\  y_{21} & y_{22} \end{pmatrix}.\]
Then,
\begin{equation}\label{rep of s} 
qxq=\begin{pmatrix}  x_{11} + z x_{21} &  (x_{11} + z x_{21})z \\  0 & 0 \end{pmatrix}, 
\end{equation}
and
$$ qxq y^*=\begin{pmatrix}   (x_{11} + z x_{21})y_{11}^*  + (x_{11} + z x_{21})zy_{12}^* &  *       \\   0  &  0      \end{pmatrix}. $$
Thus, $y$ is orthogonal to $\s$ if and only if $Tr((x_{11} + z x_{21})(y_{11}^*  + zy_{12}^*))=0$ for all $x \in \b_1(\h)$. 
Therefore, we obtain
\[  \s^{\perp} \cap \b_1(\h) =\{ \, y \in \b_1(\h)   \, : \,  y_{11} + z^* y_{12}=0 \,  \}. \]
The subspace $\s$ is compatible if and only if $\s \, \dot{+} \, (\s^{\perp} \cap \b_1(\h)) = \b_1(\h)$.
This means that any operator $w \in \b_1(\h)$ can be written as 
\[  w=\begin{pmatrix}  w_{11} & w_{12} \\  w_{21} & w_{22} \end{pmatrix} = \begin{pmatrix}  x_{11} + z x_{21} - z^*y_{12} &  (x_{11} + z x_{21})z + y_{12} \\  y_{21} & y_{22} \end{pmatrix}.  \]
The only non trivial equations to solve are the following
\[  w_{11}=x_{11} + z x_{21} - z^*y_{12}, \, \, \, \, w_{12}=(x_{11}+ zx_{21})z + y_{12} \, .   \]
Put $a=x_{11} + z x_{21}$, $b=y_{12}$, $x=w_{11}$ and $y=w_{12}$. Then $\s$ is a compatible subspace if and only if 
\begin{align*}\label{ec compat}
 x=a - z^*b, \, \, \, \,  \, \, y=az + b \,
\end{align*}
have a solution $a,b \in \b_1(\k)$ for each pair $x,y \in \b_1(\k)$. 
By the first equation, $b=y-az$, so we have to solve $x + z^*y=a + zaz^*=(I+Ad_z)(a)$, where 
$Ad_z:\b_1 (\k)  \to \b_1(\k)$ is defined by $Ad_z(x)=z^*xz$. It is not difficult to see that 
$\|Ad_z\|=\|z\|^2$. Since we suppose $\|z\|<1$, then we have $0 \notin \sigma_{\b_1(\h)}(I +Ad_z )$.
Therefore, $I +Ad_z$ is surjective, and $\s$ is a compatible subspace.

\smallskip

\noi \text{ii)} We now assume that $z$ is an invertible normal operator. In the preceding paragraph, we can 
rewrite the operator $I + Ad_z$ using the left and right multiplication operators on $\b_1(\k)$ given by 
$L_{z^*}(x)=z^*x$ and $R_{-z}(x)=-xz$. Thus, we have 
$  I + Ad_z=L_{z^*}(L_{(z^*)^{-1}} - R_{-z})$. Since $L_{z^*}$ is invertible, the equation $x + z^*y=(I+Ad_z)(a)$ has a solution if and only if
the operator $L_{(z^*)^{-1}} - R_{-z}: \b_1(\k) \to \b_1(\k)$ is surjective. 

Among several equivalent conditions, it was proved in \cite[Thm. 3.2]{fialkow}  that the operator
\[  \tau_1 : \b_1(\k) \to \b_1(\k), \, \, \, \, \tau_1(x)=cx - xd,  \] 
is surjective if and only if $\sigma_r(c)\cap \sigma_l(d)=\emptyset$. Here  $\sigma_r(c)$ is the right spectrum of $c$ and $\sigma_l(d)$ is the left spectrum of $d$. Applying this result to our situation, where $c=(z^*)^{-1}$ and $d=-z$, and using that the right and left spectra of a normal operator coincide with its usual spectrum, we find that  $L_{(z^*)^{-1}} - R_{-z}$ is surjective when 
$\sigma((z^*)^{-1}) \cap \sigma(-z) = \emptyset$. This latter condition can be also written as $\ov{\lambda} \mu \neq - 1$ for all $\lambda, \mu \in \sigma(z)$.
\end{ejem}

\begin{ejem}\label{two diff companions}
From Eq. (\ref{rep of s}) in the previous example, we know that 
$$  \s=\left\{ \, \begin{pmatrix} a & az  \\ 0 & 0  \end{pmatrix} \, : \, a \in \b_1(\k)  \, \right\}. $$
A proper companion of the subspace $\s$ is given by
   
\begin{align*}
\t  =N(C_q)= \left\{ \, \begin{pmatrix} -za & b \\  a & c  \end{pmatrix}  \, : \,a,b,c \in \b_1(\k) \,  \right\}.
\end{align*}
We shall prove the following facts:
\begin{itemize}
\item[i)] The subspace $\t$ is compatible for all $z \in \b(\k)$.
\item[ii)] Let $z \in Gl(\k)$ be  a normal operator such that $\s$ is a compatible subspace. Then there exists an operator $G \in \p^{\times}$ such that $G(\t)=\t$ and $G(\s)$ is a non compatible proper companion of $\t$.  
\end{itemize}

\smallskip

\noi $\text{i)}$ 
The orthogonal supplement of $\t$ in $\b_1(\h)$ can be computed:
$$ \t^\perp \cap \b_1(\h)=\left\{ \, \begin{pmatrix} a & 0 \\ z^*a & 0 \end{pmatrix} \, :  \, a \in \b_1(\k) \, \right\}.  $$
Then $\t$ is a compatible subspace if and only if it is possible to solve in $\b_1(\h)$ the following equation: 
\[ \begin{pmatrix} w_{11} & w_{12} \\ w_{21} & w_{22} \end{pmatrix}   = \begin{pmatrix} a_0 & 0 \\  z^*a_0 & 0     \end{pmatrix}  +    \begin{pmatrix}   -za_1 & b_1 \\ a_1 & c_1 \end{pmatrix},  \]
for every $w_{ij} \in \b_1(\k)$, $i,j=1,2$.  The only non trivial equations to solve are 
\[  w_{11}=a_0 - z a_1, \, \, \, \, w_{21}=z^*a_0 + a_1 , \,     \]
which always have solutions  given by $a_1=w_{21} -z^*a_0$ and 
$a_0=(1+zz^*)^{-1} (w_{11} + zw_{21})$. 

 
\smallskip

\noi $\text{ii)}$ 
Suppose that $x \in Gl(\h)$ has the matrix representation
$$  x=\begin{pmatrix}  z & 0  \\ 0 & t \end{pmatrix}, $$
where $t$ is a self-adjoint symmetry on $\k$. We take $G=R_x: \b_1 (\h) \to \b_1(\h)$, $G(y)=yx$. Clearly, $G$ is a proper operator, $G^+=R_{x^*}$, and both $G$ and $G^+$ are invertible on $\b_1(\h)$. Thus, we get $G \in \p^{\times}$. 

We observe that 
$$  \begin{pmatrix} -za & b \\  a & c  \end{pmatrix}\begin{pmatrix}  z & 0  \\ 0 & t \end{pmatrix}=\begin{pmatrix}  -zaz &  bt  \\ az & ct  \end{pmatrix}, $$
hence, we get $G(\t)\subseteq \t$. Similarly, $G^{-1}(\t)\subseteq \t$, so we obtain $G(\t)=\t$. 
According to Proposition \ref{relacion supl propers}, the subspace
\[ G(\s)=\left\{  \, \begin{pmatrix}  b & bt  \\ 0 & 0 \end{pmatrix}  \, : \, b \in \b_1(\k) \,  \right\} \]
is a proper companion of $\t$. By the item $\text{ii)}$ of Example \ref{prop no comp} we get that $G(\s)$ is not compatible.    
\end{ejem}

\subsection{Proper invertible operators.}

Proper operators have three different notions of inverses (see Remark \ref{notions of spec}). In this subsection we study
proper invertible operators. 

\begin{ejem}
Invertible operators in $\e$ which are isometric for the $\l$ inner product. We shall call them \textit{unitarizable operators}. In the special case when $\e=\h$ is a Hilbert space, these were studied in \cite{achl} and \cite{anchdi}. They can be obtained, for instance, as exponentials $A=e^{iX}$, with $X$ a symmetrizable operator. But not every $\l$-isometric operator is an exponential (see \cite[Example 4.9]{achl}).
\end{ejem}

\begin{ejem}
A special case of the above example occurs if we take $\e=\b_1(\h)$ and $\l=\b_2(\h)$. Let $u,v$ be unitary operators in $\h$, and denote by $x^t$ the transpose of $x\in\b(\h)$ with respect to a given orthonormal basis of $\h$. Then the operators
$$
\mu_{u,v}, \theta_{u,v}\in\b(\b_1(\h)), \ \ \mu_{u,v}(x)=uxv \hbox{ and } \theta_{u,v}(x)=ux^tv
$$
are isometric  for the norms $\|\cdot \|_p$ for any $1\le p \le \infty$ (for $p\ne 2$, any isometry for the $p$ norm is of this type \cite{arazy}). Thus $\mu_{u,v}$ and $\theta_{u,v}$ are invertible in $\p$ (in fact they are unitarizable). If one replaces the unitaries $u,v$ by invertible operators $g, h$ in $\h$, then $\mu_{g,h}$ and $\theta_{g,h}$ are proper and invertible operators in $\b_1(\h)$ with inverses $\mu_{g^{-1},h^{-1}}$ and $\theta_{g^{-1},h^{-1}}$ which are also proper operators. 
\end{ejem}

\begin{ejem}
Let $\e=H^1_0(\Omega)$ be the Sobolev space of the domain $\Omega\subset\mathbb{R}^n$, i.e. the completion of $C_0^\infty(\Omega)$ with the inner product norm
$$
\|f\|_{1}=\int_\Omega |f(x)|^2 + |(\nabla f)(x)|^2 dx.
$$
Let $\l=L^2(\Omega , dx)$ be the Lebesgue space of square-integrable functions endowed with its usual inner product. Pick a function $\varphi$, which is $C^1$ in $\Omega$, continuous and bounded in $\bar{\Omega}$ and satisfies $|\varphi(x)|\ge r>0$ for $x\in\Omega$. Then the multiplication operator
$$
M_\varphi f=\varphi f,
$$
preserves $\e$. Its adjoint in $\l$, which is $M_{\bar{\varphi}}$, also preserves $\e$. Thus $M_\varphi$ is a proper operator. Its inverse $M_{\frac{1}{\varphi}}$ also belongs to $\p$. Thus, $M_\varphi\in \p^{\times}$. Moreover, apparently
$$
\sigma_\l(M_\varphi)=\sigma_E(M_\varphi)=\sigma_\p(M_\varphi)=\varphi(\bar{\Omega}).
$$
\end{ejem}
There is another situation in which the three spectra coincide. 

\begin{prop}
Let $G\in\p$ such that $G-I$ and $G^+ -I$ are compact operators on $\e$. Assume that its extension $\bar{G}$ is invertible in $\l$, then $G \in \p^{\times}$. 
\end{prop}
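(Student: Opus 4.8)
The plan is to prove that $G \in \p^\times$ by showing that $G$ and $G^+$ are both invertible on $\e$. Since $\p$ is an involutive Banach algebra and $G$ is invertible in $\p$ precisely when $G$ and $G^+$ are invertible on $\e$ (Remark \ref{notions of spec}), this suffices. Moreover, since $(G^+)^+ = G$ and $G^+ - I$ is compact with $\overline{G^+} = \bar{G}^*$ invertible on $\l$, it is enough to prove that $G$ itself is invertible on $\e$; the same argument applied to $G^+$ then gives invertibility of $G^+$.

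First I would observe that $G = I + K$ where $K = G - I$ is compact on $\e$. By the Fredholm alternative in the Banach space $\e$, $G$ is invertible on $\e$ if and only if $G$ is injective on $\e$, i.e. $N(G) = \{0\}$. So the task reduces to showing $N(G) = \{0\}$. Now suppose $f \in \e$ with $Gf = 0$. Then $\bar{G}f = Gf = 0$ in $\l$, since $\bar{G}$ extends $G$. But $\bar{G}$ is assumed invertible on $\l$, hence injective, so $f = 0$. This shows $N(G) = \{0\}$, and the Fredholm alternative gives that $G$ is invertible on $\e$.

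The same reasoning applies verbatim to $G^+$: it is of the form $I + K^+$ with $K^+ = G^+ - I$ compact on $\e$, and its extension $\overline{G^+} = \bar{G}^*$ is invertible on $\l$ because $\bar{G}$ is (the adjoint of an invertible operator on a Hilbert space is invertible). Hence $N(G^+) = \{0\}$ by the same injectivity argument, and the Fredholm alternative yields that $G^+$ is invertible on $\e$. Therefore both $G$ and $G^+$ are invertible on $\e$, which is exactly the condition for $G \in \p^\times$.

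The only point requiring a little care is the legitimacy of invoking the Fredholm alternative on the Banach space $\e$ rather than a Hilbert space, but this is standard: for a compact operator $K$ on any Banach space, $I + K$ is injective if and only if it is surjective if and only if it is invertible. I do not expect any serious obstacle here; the statement is essentially a packaging of the Fredholm alternative together with the fact that $\bar{G}$ extends $G$ and that adjoints of Hilbert-space invertibles are invertible. One could alternatively phrase the conclusion spectrally: compactness of $G - I$ forces $\sigma_\e(G) \setminus \{1\}$ to consist of isolated eigenvalues of finite multiplicity, each of which is an eigenvalue of $\bar{G}$ by Theorem \ref{proper op} iii) applied to $G - I$; since $0 \notin \sigma_\l(\bar{G})$, we get $0 \notin \sigma_\e(G)$, and symmetrically $0 \notin \sigma_\e(G^+)$, whence $0 \notin \sigma_\p(G)$.
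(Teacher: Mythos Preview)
Your proof is correct and follows essentially the same approach as the paper: write $G = I + K$ with $K$ compact, apply the Fredholm alternative on $\e$ to reduce to injectivity, and inherit injectivity from the invertibility of $\bar{G}$ (respectively $\bar{G}^*$) on $\l$. Your injectivity step---using directly that $\bar{G}$ extends $G$---is in fact slightly more elementary than the paper's, which instead passes through the eigenvalue correspondence of Theorem~\ref{proper op}~iii) to conclude $-1$ is not an eigenvalue of $K$; you even sketch that spectral route as an alternative at the end.
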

\begin{proof}
The set of invertible operators $G$  in $\e$ such that $G-I$ is compact form a closed subgroup of the invertible group of $\e$ (it is sometimes called the Fredholm group of $\e$). Let $G=I+K$ for some $K$ compact in $\e$. The operator $K$ is proper, and therefore   $\bar{K}$ is compact in $\l$, with the same (non nil) eigenvalues as $K$. Furthermore, the multiplicity of each nonzero eigenvalue is the same over $\e$  and $\l$  (see Theorem \ref{proper op}). Thus $0$ does not belong to the spectrum of $G$. Since $K^+=\bar{K}^*|_\e$ is also compact on $\e$, $\bar{K}^*$ is compact on $\l$, and its eigenvalues are the conjugates of the eigenvalues of $K$. It follows that  $G^+=I+K^+$ has trivial kernel, and thus, by the Fredholm alternative, it is invertible in $\e$.
\end{proof}

\begin{rem}
Unitarizable operators preserve compatible subspaces: if $G$ is unitarizable and  $\s$ is compatible, then $G(\s)$ is also compatible. This allows to produce more examples of proper subspaces which are not compatible. Namely, if $\s$ is a proper  non compatible subspace and $G$ is unitarizable, then $G(\s)$ is a proper  non compatible subspace.
\end{rem}

\begin{ejem}
Consider again Example \ref{prop no comp} for some projection $q$ such that $\s$ is non compatible. Let $u,v$ be unitary operators in $\h$. Then by the above remark, if
$$
\s=\{qxq: x\in\b_1(\h)\},
$$
then $\mu_{u,v}(\s)$ is proper but non compatible. Explicitly, this subspace is the range of the idempotent $\mu_{u,v} C_q \mu_{u^*,v^*}$. Then
$$
\mu_{u,v}(\s)=\{(uqu^*)x(vqv^*): x\in\b_1(\h)\}.
$$
Thus the subspaces $\s_{q_1,q_2}=\{q_1xq_2: x \in \b_1(\h)\}$ are proper and non compatible, if $q_i$ are chosen in the unitary orbit of $q$.
\end{ejem}

\section*{Acknowledgments}

We thank Gustavo Corach for several helpful conversations on compatible subspaces and Alejandra Maestripieri for providing useful references.  
All the authors are partially supported by PIP CONICET 0757. The second author is also partially supported by UNLP 11X585.

\bigskip
\bigskip

\noi {\sc (Esteban Andruchow)} {Instituto de Ciencias,  Universidad Nacional de Gral. Sar\-miento,
J.M. Gutierrez 1150,  (1613) Los Polvorines, Argentina and Instituto Argentino de Matem\'atica, `Alberto P. Calder\'on', CONICET, Saavedra 15 3er. piso,
(1083) Buenos Aires, Argentina.}

\noi e-mail: {\sf eandruch@ungs.edu.ar}

\bigskip

\noi {\sc (Eduardo Chiumiento)} {Departamento de de Matem\'atica, FCE-UNLP,Calles 50 y 115, 
(1900) La Plata, Argentina  and Instituto Argentino de Matem\'atica, `Alberto P. Calder\'on', CONICET, Saavedra 15 3er. piso,
(1083) Buenos Aires, Argentina.}     
                                               
\noi e-mail: {\sf eduardo@mate.unlp.edu.ar}

\bigskip

\noi {\sc (Mar\'ia Eugenia Di Iorio y Lucero)} {Instituto de Ciencias,  Universidad Nacional de Gral. Sarmiento,
J.M. Gutierrez 1150,  (1613) Los Polvorines, Argentina and Instituto Argentino de Matem\'atica, `Alberto P. Calder\'on', CONICET, Saavedra 15 3er. piso,
(1083) Buenos Aires, Argentina.}

\noi e-mail: {\sf mdiiorio@ungs.edu.ar }

\end{document}